\newtheorem{theorem}{Theorem}[section]
\newtheorem{thm}[theorem]{Theorem}
\newtheorem{lemma}[theorem]{Lemma}
\newtheorem{lem}[theorem]{Lemma}
\newtheorem{remark}[theorem]{Remark}
\newtheorem{proposition}[theorem]{Proposition}
\newtheorem{corollary}[theorem]{Corollary}
\newtheorem{hyp}[theorem]{HYPOTHESIS}
\theoremstyle{definition}
\newtheorem{defn}[theorem]{Definition}
\newtheorem{example}[theorem]{Example}
\theoremstyle{remark}
\numberwithin{equation}{section}
 \DeclareMathAlphabet{\mathpzc}{OT1}{pzc}{m}{it}
 \DeclareMathAlphabet{\mathsfsl}{OT1}{cmss}{m}{sl}
  \newcommand{\FH}{\mathfrak{H}}
\newcommand{\dif}{\mathrm{d}}
\newcommand{\abs}[1]{\left\vert#1\right\vert}
\newcommand{\set}[1]{\left\{#1\right\}}
\newcommand{\norm}[1]{\left\Vert#1\right\Vert}
\newcommand{\E}{\mathbb{E}}
\newcommand{\R}{\mathbb{R}}
 \newcommand{\innp}[1]{\langle {#1}\rangle}
\newcommand{\Be}{\begin{equation}}
\newcommand{\Ee}{\end{equation}}
\newcommand{\Bs}{\begin{split}}
\newcommand{\Es}{\end{split}}
\newcommand{\Bes}{\begin{equation*}}
\newcommand{\Ees}{\end{equation*}}
\newcommand{\BT}{\begin{thm}}
\newcommand{\ET}{\end{thm}}
\newcommand{\Bp}{\begin{proof}}
\newcommand{\Ep}{\end{proof}}
\newcommand{\BL}{\begin{lem}}
\newcommand{\EL}{\end{lem}}
\newcommand{\BP}{\begin{proposition}}
\newcommand{\EP}{\end{proposition}}
\newcommand{\BC}{\begin{corollary}}
\newcommand{\EC}{\end{corollary}}
\newcommand{\BR}{\begin{remark}}
\newcommand{\ER}{\end{remark}}
\newcommand{\BD}{\begin{defn}}
\newcommand{\ED}{\end{defn}}
\newcommand{\BI}{\begin{itemize}}
\newcommand{\EI}{\end{itemize}}
\newcommand\sgn{\mathrm{sgn}}
\begin{document}
\title[Statistical Estimations for Non-Ergodic Vasicek Model]{Statistical Estimations for Non-Ergodic Vasicek Model Driven by Two Types of Gaussian Processes}

\author[Y. Chen]{Yong Chen}
\address{School of Mathematics and Statistics, Jiangxi Normal University, 330022, Nanchang,  P. R. China}
\author{Wujun  Gao}
\address{College of Big Data and Internet, Shenzhen Technology University, Shenzhen {518118}, \ China.}
\author{{Ying   {LI}} }
\address{ School of Mathematics and Computational Science, Xiangtan University, Xiangtan {411105},\ China  }\email{ liying@xtu.edu.cn}
\begin{abstract}
 We study the joint asymptotic distribution of the least squares estimator of the parameter $(\theta,\,\mu)$ for the non-ergodic Vasicek models driven by seven specific Gaussian processes. 
To facilitate the proofs, we derive the inner product formulas of the canonical Hilbert spaces associated to the seven specific Gaussian processes. The integration by parts for normalized bounded variation functions is essential to the inner product formulas. We apply the inner product formulas of the seven Gaussian processes to check the set of conditions of Es-Sebaiy, Es.Sebaiy (2021).\\
{\bf Keywords :} Gaussian Vasicek-type model; Least squares estimator;  Fractional Gaussian process; Fractional Ornstein-Uhlenbeck process; Lebesgue-Stieljes measure\\
{\bf MSC 2000:}60G15; 60G22; 62M09.
\end{abstract}
\maketitle

\section{Introduction and main results}\label{sec introd}
The Gaussian Vasicek-type model is known as the solution to the stochastic
differential equation
\begin{align}\label{Vasicekmodel0000}
  \dif X_t =\theta(\mu +X_t) \dif t +\dif  G_t, \quad X_0 = 0 ,\quad \theta,\mu\in \R,
\end{align}
where the {driving} noise $G=\set{G_t: t\in[0,T ]}$ is a Gaussian process. This model has a wide range of applications in many fields,
such as economics, finance, biology, medical and environmental sciences. In the economic field, it has been used to describe the fluctuation of interest rates, please refer to \cite{huang 2012}. In the financial field, it can also be used as a random investment model, {see \cite{wu2020}}.

When $ \theta>0$ (i.e., non-ergodic case), the statistical estimations for the Gaussian Vasicek-type model \eqref{Vasicekmodel0000}  
have been studied in \cite{Yuqian2020} and \cite{Sebaiy22021} {based on continuous observations of $X$ over the time interval $[0, T]$}. Under a set of conditions, the asymptotic behavior of the estimators of the parameters $\theta$ and $\mu$ are obtained in \cite{Yuqian2020} and the joint asymptotic distributions of the estimators of the parameters $(\theta,\,\mu)$ and $(\theta,\,\alpha)$ with $\alpha=\theta\mu$ are derived in \cite{Sebaiy22021}  (see Theorem~\ref{Eseiby theorem}). The fractional Brownian motion (fBm), the bi-fractional Brownian motion and sub-fractional Brownian motion are examples satisfying the set of conditions. Clearly, the result of \cite{Sebaiy22021} is stronger than that of \cite{Yuqian2020}. 
When $G$ is the generalized sub-fractional Brownian motion with parameters $H',K\in (0,1)$, the asymptotic behavior of the estimators of $\theta$ and $\mu$ are shown in \cite{KX2023}. 
Non-Gaussian Vasicek-type models driven by Hermite process and Liu process are investigated in {\cite{NT 19,Yuqian2020}} and \cite{wei2023} respectively. {Moreover, based on discrete observations of $X$ over the time interval $[0, T]$,  the asymptotic properties of the least squares estimators of $(\theta, \mu)$ are investigated for stationary case in \cite{xiaoyu2019a} and 
for explosive case in \cite{jiang 2014} when $G$ is a fractional Brownian motion. }

The aim of this paper is to study the statistical estimation for a non-ergodic Gaussian Vasicek-type model
\begin{align}\label{Vasicekmodel}
  \dif X_t =\theta(\mu +X_t) \dif t +\dif  G_t, \quad X_0 = 0 ,\quad \theta>0,\, \mu\in \R,
\end{align}
where the {driving} Gaussian noise $G=(G_t)_{t\in[0,T ]}$ is taken from Examples~\ref{exmp 000001}-\ref{exmp lizi001}. 
There are two reasons for us to propose this problem. One is that the study of the Vasicek models driven by Examples~\ref{exmp 000001}-\ref{exmp lizi001} is novel, the other is that the set of conditions $(\mathcal{A}_3)$-$(\mathcal{A}_5)$ of Theorem~\ref{Eseiby theorem} are not given in terms of the covariance structure of $G$. 


\begin{example}\label{exmp 000001}
{ The Gaussian process $G=\set{G_t:t\ge 0}$ is defined by the Wiener integral: 
  $$G_{t}  = \sqrt{C_H}\int_{0}^{\infty} \left( 1-e^{-r t} \right) r^{-\frac {1+2H}{2}} \dif W_{r} ,\quad t\ge 0,$$ where $\{W_t,t \ge 0 \}$ is the standard Brownian motion, $H \in (0, \frac12)\cup(\frac12,1) $, and the constant $C_H$ is given by
  \begin{align*}
    C_H=\left\{
    \begin{array}{ll}
      \frac{H}{\Gamma(1-2H)},        & \quad  H\in (0,\frac12), \\
      \frac{H(2H-1)} {\Gamma(2-2H)}, & \quad H\in (\frac12,1),
    \end{array}
    \right.
  \end{align*}
  Then the covariance function of $G$ is given by
  \begin{equation*}
    R(s,t)=\left\{
    \begin{array}{ll}
      \frac12[t^{2H}+s^{2H} -(t+s)^{2H}], & \quad  H\in (0,\frac12), \\
      \frac12[(t+s)^{2H}-t^{2H}-s^{2H} ], & \quad H\in (\frac12,1).
    \end{array}
    \right.
  \end{equation*}
  See \cite{Bardina2009, Lei2009}.}
\end{example}

\begin{example}\label{exmp 000001-04}
 { The trifractional Brownian motion $Z^{H',K}=\set{Z^{H',K}(t): t\ge 0} $ with parameters $H' \in (0, 1),\,K \in(0,1)$ is a centered Gaussian process with covariance function
  $$ R(s,\, t)=t^{2H'K} + s^{2H'K}- (t^{2H'}+s^{2H'})^{K}  .$$
  See \cite{Lei2009, ma2013}. }
\end{example}


\begin{example} \label{exmp 000001-06}
{  The process $G=\{G_t , t\ge 0\} $ with parameter $H\in (0,\frac12]$ is a centered Gaussian process with covariance function
  $$ R(s, t)= \frac12\left[(t+s)^{2H}-(\max(s,t))^{2H} \right] . $$
 This function is nonnegative definite if and only if $H\in (0,\frac12]$. See \cite[Theorem 1.1(i)]{Talarczyk2020}.}
\end{example}

\begin{example}\label{exmp 000001-06-buch}
{  The process $\{Z_t , t\ge 0\} $ with parameter $H\in (0,\frac12]$ is a centered Gaussian process with covariance function
  $$ R(s, t)= \Gamma(1-2H)(\min(s,t))^{2H} . $$
  {This function is nonnegative definite if and only if $H\in (0,\frac12]$.}
  See \cite[Theorem 2.1]{DW2016}.}
\end{example}

\begin{example}\label{exmp0005}
{  The generalized sub-fractional Brownian motion (also known as the sub-bifractional Brownian motion) $S^{H',K}=\set{S^{H',K}(t), t\ge 0}$ with parameters $H' \in (0, 1),\,K \in(0,2)$ such that $H:=H'K\in (0,1)$ 
is a centered Gaussian process with covariance function}
  $$ R(s,\, t)= (s^{2H'}+t^{2H'})^{K}-\frac12 \left[(t+s)^{2H'K} + \abs{t-s}^{2H'K} \right].$$
 When $K=1$, it degenerates to the sub-fractional Brownian motion $S^H(t) $ with parameter $H\in (0,1)$.  See \cite{NoutyJourn2013} and \cite{Sgh2013} for the case of $K\in (0,1)$ and $K\in (1,2)$, respectively.
\end{example}
\begin{example}\label{exmp7-1zili}
  The generalized fractional Brownian motion { $G =\{G_t , t\ge 0\}$ is a centered Gaussian process with covariance function}
  $$R(s,\, t)=\frac{(a+b)^2}{2(a^2+b^2)}(s^{2H}+t^{2H})-\frac{ab}{a^2+b^2}(s+t)^{2H}-\frac12 \abs{t-s}^{2H},$$
  where $H\in (0, 1)$ and $(a,b)\neq (0,0)$. See \cite{Zili17}. It is an extension of both the fractional Brownian motion and the sub-fractional Brownian motion. 
\end{example}
\begin{example}\label{exmp lizi001}
  The Gaussian process $G =\{G_t , t\ge 0\}$ {is a centered Gaussian process with } covariance function
  \begin{align} \label{cova func 01}
    R(s, t)=\frac12\big[ (\max(s,t))^{2H} - |t-s|^{2H}\big] .
  \end{align}
{This function is nonnegative definite if and only if $H\in (0,\frac12]$.}  See \cite[Theorem 1.1 (ii)]{Talarczyk2020}.
\end{example}

Now let us return to the non-ergodic Gaussian Vasicek-type model \eqref{Vasicekmodel}. Assume the whole trajectory of $X=\set{X_t: t\in [0, T]}$ is continuously observed. The least squares estimators are proposed in \cite{Sebaiy22021} for the unknown parameters $\theta$, $\alpha=\theta\mu$ and $\mu $ as follows:
\begin{align}\label{LSE1}
  \hat{\theta}_T =\frac{\frac 12 T X_T^2-X_T \int_{0}^{T}X_t \dif t }{T\int_{0}^{T}X_t^2 \dif t-\left(\int_{0}^{T}X_t \dif t \right)^2},
\end{align}
\begin{align}\label{LSE2}
  \hat{\alpha}_T=\frac{X_T\int_{0}^{T}X_t^2 \dif t - \frac 12 X_T^2 \int_{0}^{T}X_t \dif t }{T\int_{0}^{T}X_t^2 \dif t-\left( \int_{0}^{T}X_t \dif t \right)^2},
\end{align}
and \begin{align}\label{LSE3}
  \hat{\mu}_T=\frac{\hat{\alpha}_T}  {\hat{\theta}_T } =\frac{\int_{0}^{T}X_t^2 \dif t - \frac 12 X_T \int_{0}^{T}X_t \dif t }{\frac 12 T X_T- \int_{0}^{T}X_t \dif t }.
\end{align}

In this paper, we will demonstrate the joint asymptotic distributions of the estimators $(\hat{\theta}_T,\,\hat{\mu}_T)$ and $(\hat{\theta}_T,\,\hat{\alpha}_T)$ when the {driving} noise $G$ is taken from Examples~\ref{exmp 000001}-\ref{exmp lizi001}. The results 
are stated in the following Theorem~\ref{asymptoticth}. By \eqref{beta biaoshishizi},  three types of fractional powers emerge in the asymptotic behavior of $\hat{\theta}_T$. 

\begin{theorem}\label{asymptoticth}
For the Vasicek model \eqref{Vasicekmodel} driven by the Gaussian process given by Examples~\ref{exmp 000001}-\ref{exmp lizi001}, we have that as $T \to \infty$,
  \begin{align*}
     & \left(T^{\beta} e ^{\theta T}(\hat{\theta}_T - \theta), \, T^{1-H}(\hat{\mu}_T - \mu) \right) \xrightarrow{law} \left( \frac{ N_2}{N_3},\, \frac {1}{\theta} N_1 \right), \\
     & \left( T^{\beta} e ^{\theta T}(\hat{\theta}_T - \theta), \, T^{1-H}(\hat{\alpha}_T - \alpha)\right) \xrightarrow{law} \left(\frac{ N_2}{N_3},\, N_1 \right),
  \end{align*} where
  \begin{equation}\label{beta biaoshishizi}
    \beta=\left\{
    \begin{array}{ll}
      1-H,           & \quad \text{for Examples~\ref{exmp 000001}-\ref{exmp 000001-04}}  ,       \\
      \frac{1}{2}-H, & \quad \text{for Examples~\ref{exmp 000001-06}-\ref{exmp 000001-06-buch}}, \\
      0,             & \quad\text{for Examples~\ref{exmp0005}-\ref{exmp lizi001}},
    \end{array}
    \right.
  \end{equation}
  and $N_1\sim \mathcal{N}(0,\lambda_G^2), N_2\sim\mathcal{N}(0, 4\theta^2\sigma_G^2), N_3\sim  \mathcal{N}(\mu,\theta^2\sigma^2_{\infty})$  are independent Gaussian random variables and $ \lambda_G^2,\,\sigma_{G}^2,\,\sigma^2_{\infty}$ are positive constants given by \eqref{assum51-new}, \eqref{changshu sg2} and \eqref{Sigmapingfang 2} respectively.
\end{theorem}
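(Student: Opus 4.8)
The plan is to deduce Theorem~\ref{asymptoticth} from the general limit theorem of Es-Sebaiy (Theorem~\ref{Eseiby theorem}): it suffices to verify that each of the seven driving processes of Examples~\ref{exmp 000001}--\ref{exmp lizi001} satisfies the hypotheses $(\mathcal{A}_3)$--$(\mathcal{A}_5)$, and to identify the three limiting constants $\sigma_\infty^2$, $\sigma_G^2$, $\lambda_G^2$ with concrete covariance functionals. Since those conditions are not phrased through the covariance $R$, the first task is to re-express them by means of the two structural Hypotheses~\ref{hypthe1-1} and \ref{hypthe2new-new}, which isolate precisely the features of $R$ that govern the asymptotics; the seven examples then split into the two hypothesis classes, and the trichotomy of $\beta$ in \eqref{beta biaoshishizi} will emerge from the different decay rates of the boundary contributions of $R$ near the terminal time.

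First I would record the explicit solution $X_t = \mu(e^{\theta t}-1) + e^{\theta t}\int_0^t e^{-\theta s}\dif G_s$ and isolate the terminal limit $e^{-\theta T}X_T \to \mu + \xi_\infty$, where $\xi_\infty := \int_0^\infty e^{-\theta s}\dif G_s$ is a well-defined Gaussian variable once $\|e^{-\theta\cdot}\|_\FH<\infty$. Substituting this into \eqref{LSE1}--\eqref{LSE3} and tracking the leading exponential order $e^{2\theta T}$ shows $\hat\theta_T\to\theta$, $\hat\mu_T\to\mu$, $\hat\alpha_T\to\alpha$. The rate for $\hat\theta_T$ is set by the first non-cancelling correction in the numerator of $\hat\theta_T-\theta$, which factors asymptotically as $X_T$ times a centred fluctuation converging to $N_2$, so that the denominator of order $Te^{2\theta T}\xi^2$ (with $\xi:=\mu+\xi_\infty$) leaves the ratio $N_2/N_3$. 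For $\hat\mu_T$ the leading $e^{2\theta T}$ contributions cancel identically, so the error is controlled by a subleading fluctuation of order $T^{H}e^{\theta T}$ against a denominator of order $Te^{\theta T}$, producing the slower polynomial rate $T^{1-H}$.

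The technical core is the evaluation of the three constants, each of which is an $\FH$-inner product of bounded-variation functions built from $\one_{[0,T]}$ and $e^{\pm\theta\cdot}$: $\sigma_\infty^2$ comes from $\V(\xi_\infty)=\|e^{-\theta\cdot}\|_\FH^2$ (so that $N_3$ is, up to the normalization fixed by \eqref{Sigmapingfang 2}, the limit of $e^{-\theta T}X_T$), $\sigma_G^2$ from the near-diagonal fluctuation of $\int_0^T X_t\,\dif G_t$ about time $T$, and $\lambda_G^2$ from $\lim_{T\to\infty}T^{-2H}\V(G_T)$ together with the associated cross terms. The tool here is the integration-by-parts formula for normalized BV functions, which rewrites $\langle f,g\rangle_\FH$ as a Lebesgue--Stieltjes double integral against $R$ rather than against $\partial^2_{st}R$; this is indispensable in the regime $H<\tfrac12$ (Examples~\ref{exmp 000001}, \ref{exmp 000001-06}--\ref{exmp lizi001}), where $\partial^2_{st}R$ fails to be integrable and the naive formula diverges. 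Applying the inner-product formulas under each hypothesis then reduces the verification of $(\mathcal{A}_3)$--$(\mathcal{A}_5)$ and the computation of $\sigma_\infty^2,\sigma_G^2,\lambda_G^2$ to elementary asymptotics of $R(s,t)$ as $s,t\to\infty$.

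I expect the main obstacle to be twofold. First, the $H<\tfrac12$ inner-product computations are delicate: the Stieltjes measures $\dif(e^{-\theta s})$ are harmless in sign, but $R$ has only limited regularity, so one must justify the integration by parts and the interchange of limit and integral carefully, isolating the boundary terms whose decay rate is exactly what pins down the value of $\beta$. Second, the joint convergence to $(N_2/N_3,\ \tfrac1\theta N_1)$ requires the asymptotic independence of $N_1,N_2,N_3$: $N_3$ encodes the slow limit $\xi_\infty$ accumulated over $[0,\infty)$, whereas $N_2$ and $N_1$ encode fluctuations concentrated near the terminal time $T$, and one must show that the relevant covariances in the jointly Gaussian pre-limit vanish as $T\to\infty$. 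Granting these two points, the stated joint laws follow by writing each estimator error as a ratio, applying the continuous mapping theorem together with Slutsky's lemma, and invoking Theorem~\ref{Eseiby theorem} to package the conclusion.
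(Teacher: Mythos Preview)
Your strategy coincides with the paper's: deduce Theorem~\ref{asymptoticth} from Theorem~\ref{Eseiby theorem} by verifying its hypotheses for each of the seven examples via the inner-product formulas of Proposition~\ref{prop 2-4 fenxilizi}, and you have correctly identified that the technical core is the asymptotic evaluation of $\FH$-norms and inner products of the BV functions $\mathbbm{1}_{[0,t]}$ and $e^{-\theta(t-\cdot)}\mathbbm{1}_{[0,t]}$.

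Two remarks. First, most of your second and fourth paragraphs is redundant: the explicit-solution expansion of $X_t$, the tracking of the leading exponential orders in \eqref{LSE1}--\eqref{LSE3}, the Slutsky/continuous-mapping argument, and the asymptotic independence of $N_1,N_2,N_3$ are \emph{already} packaged inside Theorem~\ref{Eseiby theorem}, which the paper treats as a black box. The paper's proof is nothing more than checking $(\mathcal{A}_1)$--$(\mathcal{A}_5)$ one by one (Propositions~\ref{proop3-1}, \ref{proop 3-5}, \ref{assum5-1}, \ref{assum5}, plus self-similarity for $(\mathcal{A}_2)$) and then invoking Theorem~\ref{Eseiby theorem}. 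Second, you omit $(\mathcal{A}_1)$ and only implicitly touch $(\mathcal{A}_2)$: the increment bound $\E[(G_t-G_s)^2]\le C|t-s|^{2H}$ must be verified (Proposition~\ref{proop3-1}) and is what guarantees strong consistency and the finiteness of $\sigma_\infty^2$; and $(\mathcal{A}_2)$ with $p=H$ follows immediately from the self-similarity of each example, giving the explicit $\lambda_G^2$ in \eqref{assum51-new}. With those two additions and the redundant material removed, your outline is exactly the paper's proof.
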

{
\begin{remark}\label{asymptoticth OU}
The non-ergodic Ornstein–Uhlenbeck (OU) process is the solution to the stochastic differential
equation
\begin{align}\label{OU-model}
  \dif Y_t =\theta Y_t \dif t +\dif  G_t, \quad Y_0 = 0 ,\quad \theta>0.
\end{align}
The least squares estimator for the drift parameter $\theta$ is proposed in \cite{Sebaiy2016} as follows:
\begin{align}\label{OU estimator dingyi}
  \tilde{\theta}_T=\frac{Y_T^2}{2\int_0^T Y_t^2 \dif t}.
\end{align} The asymptotic behavior of $\tilde{\theta}_T$ is a special case of Theorem~\ref{asymptoticth} essentially.
  If the {driving} process $G=\set{G_t: t\in[0,T ]}$ of the OU process \eqref{OU-model} is taken from Examples~\ref{exmp 000001}-\ref{exmp lizi001}, we have that as $T \to \infty$,
  \begin{align}\label{ou jiexianfubu}
    T^{\beta} e ^{\theta T}(\tilde{\theta}_T - \theta) \xrightarrow{law} \frac{2\sigma_G}{\sigma_{\infty}}\mathcal{C}(1),
  \end{align}
  where $\mathcal{C}(1)$ is the standard Cauchy distribution, and $\beta,\,\sigma_G^2,\,\sigma_{\infty}^2$ are positive constants given by \eqref{beta biaoshishizi}, \eqref{changshu sg2} and \eqref{Sigmapingfang 2} respectively. Also, three types of fractional powers emerge in the asymptotic behavior of $\tilde{\theta}_T$.
\end{remark}}


{To check the validity of the assumptions $(\mathcal{A}_1)$-$(\mathcal{A}_5)$ of \cite{Sebaiy22021}
for the seven Gaussian driving processes given in Examples~\ref{exmp 000001}-\ref{exmp lizi001}, we have to study the canonical Hilbert spaces associated to Examples~\ref{exmp 000001}-\ref{exmp lizi001}. The second contribution of this paper is to derive the inner product formulas of the canonical Hilbert spaces associated to Examples~\ref{exmp 000001}-\ref{exmp lizi001}. The result is summarized in Proposition~\ref{prop 2-4 fenxilizi}.}

\begin{proposition}\label{prop 2-4 fenxilizi}
Let $f,\, g\in \mathcal{V}_{[0,T]}$. {Denote by $\mathfrak{H}$ and $\mathfrak{H}_1$ the canonical Hilbert space associated to $G$ and to the fBm $B^H$ respectively.}
\begin{description}
   \item [(i)] For Examples~\ref{exmp 000001}-\ref{exmp 000001-04}, the inner product formula    \begin{align}\label{diyineijigongshi 00}
        \langle f,\,g \rangle_{\FH}=\int_0^T  f(t)\dif t \int_0^T g(s)  \frac{\partial^2 R(s,t)}{\partial s  \partial t } \dif s
  \end{align}
   holds, where  
\begin{equation}\label{ziziexmp11111}
    \frac{\partial^2 R(s,t)}{\partial s  \partial t } =H\abs{2H-1}  (t+s)^{2H-2}
\end{equation}
and \begin{equation}\label{liziexmp2222}
    \frac{\partial^2 R(s,t)}{\partial s  \partial t } = (2H')^2 K (1-K) (ts)^{2H'-1}  (t^{2H'}+s^{2H'})^{K-2} 
\end{equation}
for Examples~\ref{exmp 000001}-\ref{exmp 000001-04} respectively.
   \item [(ii)] For Example~\ref{exmp 000001-06}, the inner product formula \eqref{innp fg3-zhicheng0-0} is 
   \begin{align}\label{guanjiandengshi 000-new1-000}
  \langle f,\,g \rangle_{\FH} & =H(2H-1)\int_{[0,T]^2}f(t)g(s) (t+s)^{2H-2} \dif t \dif s +H \int_0^T  {f}(t) g(t)t^{2H-1}\dif t .\end{align} 
   \item [(iii)] For Example~\ref{exmp 000001-06-buch}, the inner product formula \eqref{innp fg3-zhicheng0-0} is  
   \begin{align}\label{guanjiandengshi 000-new1-000-buch}
  \langle f,\,g \rangle_{\FH} & =2H\Gamma(1-2H) \int_0^T  {f}(t) g(t)t^{2H-1}\dif t.\end{align} 
   \item [(iv)] For Examples~\ref{exmp0005}-\ref{exmp7-1zili}, the inner product formula 
    \begin{align}\label{dierneijigongshi 00}
       \langle f,\,g \rangle_{\FH}-\langle f,\,g \rangle_{\FH_1}=\int_0^T  f(t)\dif t \int_0^T g(s)  \frac{\partial }{\partial s}\left( \frac{\partial R(s,t)}{\partial t} -  \frac{\partial R^B(s,t)}{\partial t} \right)  \dif s.
\end{align} 
   holds, where 
\begin{equation}\label{emp 1.5- pdshu}
\begin{split}
    &\frac{\partial }{\partial s}\left( \frac{\partial R(s,t)}{\partial t} -  \frac{\partial R^B(s,t)}{\partial t} \right)\\
    &=H'K  \big[4H'(K-1)(s^{2H'}+t^{2H'})^{K-2}(ts)^{2H'-1} -(2H'K-1)(s+t)^{2H'K-2}\big]
    \end{split}
\end{equation}
   and 
   \begin{equation}\label{emp 1.6- pdshu}
    \frac{\partial }{\partial s}\left( \frac{\partial R(s,t)}{\partial t} -  \frac{\partial R^B(s,t)}{\partial t} \right) =-\frac{2H(2H-1)ab}{a^2+b^2}(t+s)^{2H-2}
\end{equation}
for Examples~\ref{exmp0005}-\ref{exmp7-1zili} respectively.
    \item [(v)] For Example~\ref{exmp lizi001}, the inner product formula \eqref{innp fg3-zhicheng0} is   \begin{align}\label{guanjiandengshi 000-new1}
  \langle f,\,g \rangle_{\FH}-  \langle f,\,g \rangle_{\FH_1} & =-H \int_0^T  {f}(t) g(t)t^{2H-1}\dif t ,\end{align}
which implies that if the intersection of the supports of $f,\,g$ has Lebesgue measure zero, then
\begin{align} \label{innp fg3-zhicheng0-00}
  \langle f,\,g \rangle_{\FH}=\langle f,\,g \rangle_{\FH_1}=H(2H-1)\int_{[0,T]^2}  f(t)g(s) \abs{t-s}^{2H-2} \dif t   \dif s. 
\end{align}
   \end{description}
\end{proposition}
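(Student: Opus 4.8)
The plan is to treat Proposition~\ref{prop 2-4 fenxilizi} as a case-by-case application of Proposition~\ref{prop 2-1}, so that the real work is, for each of Examples~\ref{exmp 000001}--\ref{exmp lizi001}, to verify the relevant hypothesis, check the associated null-set condition, and then identify the Lebesgue--Stieltjes measure explicitly. First, for every example I would verify ($H_1$): each $R(s,t)$ is built from power functions $t^\alpha$ with $\alpha>0$ together with $(t+s)^\alpha$, $|t-s|^\alpha$, $\max$ and $\min$, so it is absolutely continuous in $t\in[0,T]$ for fixed $s$, and ($H_1$) is immediate. Next I would compute $\partial_t R(s,t)$ -- or, for the examples compared against fBm, the difference $\partial_t R(s,t)-\partial_t R^B(s,t)$ -- and split the examples into a \emph{regular} group and a \emph{diagonal} group according to whether this function of $s$ is continuous or has a jump at $s=t$.

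For the regular group -- Examples~\ref{exmp 000001}--\ref{exmp 000001-04} for part~(i) and Examples~\ref{exmp0005}--\ref{exmp7-1zili} for part~(iv) -- the relevant first $t$-derivative is, as a function of $s$, absolutely continuous on $[0,T]$ (for a.e.\ fixed $t>0$ its $s$-derivative, namely the stated mixed partial, is locally integrable and the function itself is bounded and monotone-type, hence of bounded variation), so after normalization it lies in $NBV[0,T]$, verifying ($H_2$) (resp.\ ($H_3$)). Being continuous in $s$, it satisfies $D\big(\partial_t R(\cdot,t)\big)=\emptyset$ (resp.\ $D\big(\partial_t R(\cdot,t)-\partial_t R^B(\cdot,t)\big)=\emptyset$), so $A_1(g)=\emptyset$ (resp.\ $A_2(g)=\emptyset$) is trivially a null set for every $g\in\mathcal{V}_{[0,T]}$. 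The Lebesgue--Stieltjes measure then has no atom and its density is the mixed second partial, so the inner integral in \eqref{innp fg3-zhicheng0-0} (resp.\ \eqref{innp fg3-zhicheng0}) collapses to an ordinary integral against $\partial_s\partial_t R(s,t)$ (resp.\ $\partial_s(\partial_t R-\partial_t R^B)$). Carrying out the differentiation produces \eqref{ziziexmp11111}, \eqref{liziexmp2222}, \eqref{emp 1.5- pdshu} and \eqref{emp 1.6- pdshu}, completing (i) and (iv).

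For the diagonal group -- Example~\ref{exmp 000001-06} in (ii), Example~\ref{exmp 000001-06-buch} in (iii), and Example~\ref{exmp lizi001} in (v) -- the $\max$/$\min$ term, after one $t$-differentiation, contributes an indicator $\mathbf{1}_{\{s<t\}}$ or $\mathbf{1}_{\{s>t\}}$ with a power of $t$ as coefficient. As a function of $s$ this is the sum of an absolutely continuous piece and a single jump at $s=t$, hence of bounded variation, and normalizing it confirms ($H_2$) (resp.\ ($H_3$)). The only discontinuity is at $s=t$, so $D\big(\partial_t R(\cdot,t)\big)=\{t\}$ and therefore $A_1(g)\subseteq\{t\in[0,T]:t\in D(g)\}=D(g)$; since a bounded variation $g$ has at most countably many discontinuities, $A_1(g)$ (resp.\ $A_2(g)$) is countable and hence null, so Proposition~\ref{prop 2-1} applies. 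The measure $\dif\big(\partial_t R(s,t)\big)$ in the variable $s$ then decomposes into its absolutely continuous part (density $\partial_s\partial_t R$, which vanishes identically for Example~\ref{exmp 000001-06-buch}) plus a point mass at $s=t$ whose weight is the jump of $\partial_t R(\cdot,t)$ there. Computing this jump gives $Ht^{2H-1}$ for Example~\ref{exmp 000001-06}, hence the extra diagonal term $H\int_0^T f(t)g(t)t^{2H-1}\,\dif t$ in \eqref{guanjiandengshi 000-new1-000}; the same accounting yields weights $2H\Gamma(1-2H)t^{2H-1}$ in \eqref{guanjiandengshi 000-new1-000-buch} and $-Ht^{2H-1}$ in \eqref{guanjiandengshi 000-new1}. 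Finally, in (v), if the supports of $f$ and $g$ meet in a set of Lebesgue measure zero then $\int_0^T f(t)g(t)t^{2H-1}\,\dif t=0$, so $\langle f,g\rangle_{\FH}=\langle f,g\rangle_{\FH_1}$ reduces to the standard fBm formula \eqref{innp fg3-zhicheng0-00}.

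The main obstacle I anticipate is not the differentiation but the careful accounting at the diagonal in the second group: one must confirm that the $s$-behavior of the $\max$/$\min$ contribution is genuinely a jump (rather than an absolutely continuous spread or a more singular object), pin down the correct jump height and sign, and match the induced atom of the Lebesgue--Stieltjes measure to the diagonal integrals in \eqref{guanjiandengshi 000-new1-000}--\eqref{guanjiandengshi 000-new1}. A secondary point requiring attention is the integrability and absolute continuity of the power densities near $t=0$ and $s=0$ (for instance exponents $2H-2<-1$ when $H<\tfrac12$), which must be checked so that ($H_1$), the $NBV$ property, and the convergence of the iterated integrals \eqref{innp fg3-zhicheng0-0} and \eqref{innp fg3-zhicheng0} are all legitimate; here the two-dimensional integrability of $(t+s)^{2H-2}$ and of $(ts)^{2H'-1}(t^{2H'}+s^{2H'})^{K-2}$, valid since $H,H',K>0$, is what rescues the formulas.
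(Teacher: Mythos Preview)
Your proposal is correct and follows essentially the same route as the paper: verify Hypothesis~\ref{hypthe1-1} or~\ref{hypthe2new-new} example by example by computing $\partial_t R(s,t)$ (or its difference with $\partial_t R^B$), split according to whether this is absolutely continuous in $s$ or has a single jump at $s=t$, and then use $A_i(g)\subseteq D(g)$ (countable, hence null) to invoke Proposition~\ref{prop 2-1}. The paper's Step~1/Step~2 organization and your regular/diagonal grouping are just different packagings of the same computation, including the identification of the jump heights $Ht^{2H-1}$, $2H\Gamma(1-2H)t^{2H-1}$, and $-Ht^{2H-1}$ for Examples~\ref{exmp 000001-06}, \ref{exmp 000001-06-buch}, and~\ref{exmp lizi001}.
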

\begin{remark}
    We can obtain a formula similar to (iv) for the sub-fractional Brownian motion and the bi-fractional Brownian motion.
\end{remark}
The rest of the paper is organized as follows: {In  Section~\ref{sec-002} we introduce the canonical Hilbert space associated to $G$ and present the theorem about the joint asymptotic distributions of the estimators of the parameters $(\theta,\,\mu)$ and $(\theta,\,\alpha)$ with $\alpha=\theta\mu$ in \cite{Sebaiy22021}.
Section~\ref{new-section} }is dedicated to show Proposition~\ref{prop 2-4 fenxilizi}. In Section~\ref{proof mn results}, we prove Theorems~\ref{asymptoticth} based on Proposition~\ref{prop 2-4 fenxilizi} and the theorem extracted from \cite{Sebaiy22021} (see Section~\ref{sub.sec.2.1}).

The symbol $C$ represents a generic constant, the value of which may vary from one line to another.

\section{Preliminary}\label{sec-002}

Assume that $G$ is defined on a complete probability space $(\Omega, \mathcal{F}, P)$. Let $\mathfrak{H}$ denote the {canonical Hilbert space associated to $G$}, which is defined as the closure of the space of all real-valued step functions on $[0, T]$ with the inner product
\begin{align*}
  \langle \mathbbm{1}_{[a,b]},\,\mathbbm{1}_{[c,d]}\rangle_{\FH}=\E\left(( G_b-G_a) ( G_d-G_c) \right).
\end{align*}
By abuse of notation, we use the same symbol $$G=\left\{G(h)=\int_{[0,T]}h(t)\dif G_t, \quad h \in \mathfrak{H}\right\}$$ to represent the isonormal Gaussian process on the probability space $(\Omega, \mathcal{F}, P)$. This process is indexed by the elements in the Hilbert space $\mathfrak{H}$ and satisfies It\^{o}'s isometry:
\begin{align}\label{G extension defn}
  \mathbb{E}(G(g)G(h)) = \langle g, h \rangle_{\mathfrak{H}}, \quad
  \forall g, h \in \mathfrak{H}.
\end{align}
Theorem 2.3 of \cite{Jolis 2007} gives the following inner product formula for $\mathfrak{H}$:
 \begin{align}\label{jolis 00}
 \langle f,\,g \rangle_{\FH}=\int_{[0,T]^2}  R(s,t) \dif \big(\nu_{f}\times \nu_{g}\big)(s, t),\qquad \forall f,\, g\in \mathcal{V}_{[0,T]},
 \end{align}where $\nu_{f}\times \nu_{g}$ is the product measure of $\nu_f$ and $\nu_{g}$,  
and for each $g\in \mathcal{V}_{[0,T]}$, $\nu_{g}$ is the restriction to $([0,T ], \mathcal{B}([0,T ]))$ of the Lebesgue-Stieltjes signed measure $\mu_{g^0}$ on $\left(\R,\mathcal{B}(\R)\right)$ of $g^0$ which is defined as
\begin{equation}\label{g0dingyi}
  g^0(x)=\left\{
  \begin{array}{ll}
    g(x), & \quad \text{if } x\in [0,T] , \\
    0,    & \quad \text{otherwise}.
  \end{array}
  \right.
\end{equation} 
The measure $\nu_g$ is similarly defined as in \cite{Jolis 2007}.

\subsection{the asymptotic behavior of the three parameters $\theta,\,\mu$ and $\alpha=\mu \theta$ }\label{sub.sec.2.1}
The following theorems are extracted from \cite{Sebaiy22021} and \cite{Sebaiy2016} respectively. 
\begin{theorem} \label{Eseiby theorem}
Let the Gaussian Vasicek-type mode be given by \eqref{Vasicekmodel} and the estimators  $\hat{\theta}_T$,\, $\hat{\mu}_T$ and $\hat{\alpha}_T$ be given by \eqref{LSE1}-\eqref{LSE3} and the normal distribution random variables $N_i,\,i=1,2,3$ be given by Theorem~\ref{asymptoticth}. Suppose that 
\begin{description}
   \item [($\mathcal{A}_1$)] There exist constants $c>0$ and  $\gamma\in (0,1)$ such that for every $s,t \ge 0$, $$\E[(G_t-G_s)^2]\le c\abs{t-s}^{2\gamma}.$$
   \item [($\mathcal{A}_2$)] There exist $\lambda_G>0$ and $p\in (0,1)$ such that $\lim\limits_{T \to \infty} \frac{\E[G_T^2]}{T^{2p}}=\lambda_G^2.$
   \item [($\mathcal{A}_3$)] There exist constants $\beta\ge 0$ and $\sigma_G>0$ such that 
   $$\lim\limits_{T \to \infty} T^{2\beta}\E\Big[\big( \int_{0}^T e^{-\theta (T-s)}\dif G_s\big)^2\Big]=\sigma_G^2.$$
   \item [($\mathcal{A}_4$)]For all fixed $s\ge 0$,
   $$ \lim _{T \to \infty} \E\Big[G_s \int_{0}^T e^{-\theta (T-r)}\dif G_r\Big]=0.$$
   \item [($\mathcal{A}_5$)]For all fixed $s\ge 0$,
    $$ \lim _{T \to \infty}\frac{\E[G_sG_T]}{T^{p}}=0,\qquad \lim _{T \to \infty} \frac{1}{T^{p}}\E\Big[G_T \int_{0}^T e^{-\theta (T-r)}\dif G_r\Big]=0,$$
   {where $p$ is the positive constant given in $(\mathcal{A}_2)$.}
\end{description}
Then we have 
\begin{description}
   \item [(i)]All the estimators $\hat{\theta}_T$,\, $\hat{\mu}_T$ and $\hat{\alpha}_T$ are strong consistent under the assumption $(\mathcal{A}_1)$.
   \item [(ii)]If $(\mathcal{A}_1)$-$(\mathcal{A}_4)$ hold then 
   as $T\to\infty$,
   $$T^{\beta} e ^{\theta T}(\hat{\theta}_T - \theta) \xrightarrow{law}\frac{N_2}{N_3},\quad  T^{1-p}(\hat{\mu}_T - \mu)   \xrightarrow{law} \frac{1}{\theta} N_1, \quad T^{1-p}(\hat{\alpha}_T - \alpha)   \xrightarrow{law}   N_1.$$
   \item [(iii)]Moreover, if $(\mathcal{A}_5)$ holds then 
   as $T\to\infty$,
     \begin{align*}
     & \left( T^{\beta} e ^{\theta T}(\hat{\theta}_T - \theta), \, T^{1-p}(\hat{\mu}_T - \mu) \right) \xrightarrow{law} \left( \frac{ N_2}{N_3},\, \frac {1}{\theta} N_1 \right), \\
     & \left( T^{\beta} e ^{\theta T}(\hat{\theta}_T - \theta), \, T^{1-p}(\hat{\alpha}_T - \alpha)\right) \xrightarrow{law} \left(\frac{ N_2}{N_3},\, N_1 \right).
  \end{align*} 
   \end{description}
\end{theorem}
\begin{remark}
Theorem~\ref{Eseiby theorem} is an extension of \cite{Sebaiy22021} where they assume $\beta=0$.  Since this extension is minor and can be shown with a slight adaption of the original proof in \cite{Sebaiy22021}, we ignore the proof. The readers can refer to \cite{Sebaiy22021} for details. 
\end{remark}

\begin{theorem}\label{Esseiby 2}
Let the non-ergodic OU process be given by \eqref{OU-model} and the estimator  $\tilde{\theta}_T$ by \eqref{OU estimator dingyi} and the constants $\sigma_G^2,\,\sigma_{\infty}^2$ by Theorem~\ref{asymptoticth OU}. Suppose that   
\begin{description}
   \item [($\mathcal{H}_1$)] The process $G$ has H\"older continuous paths of order $\gamma\in (0,1]$.
      \item [($\mathcal{H}_2$)] There exist positive constants $c$ and $ q $ such that $\E[G_t ^2]\le c t^{2q}$ for every $t \ge 0$.
   \end{description}
Then we have 
\begin{description}
   \item [(i)]The estimator $\hat{\theta}_T$ is strong consistent under the assumption $(\mathcal{H}_1)$-$(\mathcal{H}_2)$.
   \item [(ii)]Moreover, if $(\mathcal{A}_3)$-$(\mathcal{A}_4)$ hold then  as $T\to\infty$,
     \begin{align*}
T^{\beta} e ^{\theta T}(\tilde{\theta}_T - \theta) \xrightarrow{law} \frac{2\sigma_G}{\sigma_{\infty}}\mathcal{C}(1),
  \end{align*}
  where $\mathcal{C}(1)$ is the standard Cauchy distribution.
   \end{description}
\end{theorem}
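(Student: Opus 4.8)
The plan is to reduce everything to the asymptotics of the explicit solution $Y_t=e^{\theta t}\zeta_t$, where $\zeta_t:=\int_0^t e^{-\theta s}\,\dif G_s$, together with the ``backward'' functional $\eta_T:=\int_0^T e^{-\theta(T-s)}\,\dif G_s$ that already governs $(\mathcal{A}_3)$ and $(\mathcal{A}_4)$. First I would record the algebraic identity $\tilde{\theta}_T-\theta=\frac{\frac12 Y_T^2-\theta\int_0^T Y_t^2\,\dif t}{\int_0^T Y_t^2\,\dif t}=\frac{\int_0^T Y_t\,\dif G_t}{\int_0^T Y_t^2\,\dif t}$, where the numerator $\int_0^T Y_t\,\dif G_t:=\frac12 Y_T^2-\theta\int_0^T Y_t^2\,\dif t$ is understood in the divergence (or, when $H>\frac12$, Young) sense; this splits the problem into a denominator handled almost surely and a numerator handled by a Gaussian limit.

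For part (i) I would first show that $(\mathcal{H}_1)$--$(\mathcal{H}_2)$ force $\zeta_T\to\zeta_\infty:=\int_0^\infty e^{-\theta s}\,\dif G_s$ almost surely, with $\zeta_\infty$ a nondegenerate centred Gaussian. Writing $\zeta_T=e^{-\theta T}G_T+\theta\int_0^T e^{-\theta s}G_s\,\dif s$, the growth bound $\E[G_t^2]\le ct^{2q}$ makes the boundary term $e^{-\theta T}G_T\to0$ a.s. (Borel--Cantelli along integers, with $(\mathcal{H}_1)$ controlling the oscillation in between) and makes $\int_0^\infty e^{-\theta s}|G_s|\,\dif s$ integrable, hence finite a.s. An Abelian (L'H\^opital-type) lemma then gives $e^{-2\theta T}\int_0^T Y_t^2\,\dif t=e^{-2\theta T}\int_0^T e^{2\theta t}\zeta_t^2\,\dif t\to\frac{\zeta_\infty^2}{2\theta}$ a.s., and since $Y_T^2=e^{2\theta T}\zeta_T^2$ this yields $\tilde{\theta}_T=\frac{\zeta_T^2}{2\,e^{-2\theta T}\int_0^T Y_t^2\,\dif t}\to\theta$ a.s.

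For part (ii), rescaling gives $T^\beta e^{\theta T}(\tilde{\theta}_T-\theta)=\frac{T^\beta\,e^{-\theta T}\int_0^T Y_t\,\dif G_t}{e^{-2\theta T}\int_0^T Y_t^2\,\dif t}$, whose denominator already tends to $\frac{\zeta_\infty^2}{2\theta}$ a.s. For the numerator I would write $Y_t=e^{\theta t}(\zeta_\infty-\bar\zeta_t)$ with $\bar\zeta_t:=\int_t^\infty e^{-\theta s}\,\dif G_s$, giving $e^{-\theta T}\int_0^T Y_t\,\dif G_t=\zeta_\infty\,\eta_T-e^{-\theta T}\int_0^T e^{\theta t}\bar\zeta_t\,\dif G_t$. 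After cancellation of the common factor $\zeta_\infty$, the leading behaviour of the whole ratio is $\frac{\zeta_\infty\,T^\beta\eta_T}{\zeta_\infty^2/(2\theta)}=\frac{2\theta\,T^\beta\eta_T}{\zeta_\infty}$. Now $(\mathcal{A}_3)$ gives $T^{2\beta}\E[\eta_T^2]\to\sigma_G^2$, so by Gaussianity $T^\beta\eta_T\xrightarrow{law}N\sim\mathcal{N}(0,\sigma_G^2)$, while $(\mathcal{A}_4)$ forces $T^\beta\eta_T$ to become asymptotically independent of $\zeta_\infty\sim\mathcal{N}(0,\theta^2\sigma_\infty^2)$ (the constant $\sigma_\infty^2$ of \eqref{Sigmapingfang 2}); the continuous mapping theorem applied to the jointly Gaussian pair then gives $\frac{2\theta\,T^\beta\eta_T}{\zeta_\infty}\xrightarrow{law}\frac{2\theta N}{\zeta_\infty}=\frac{2\sigma_G}{\sigma_\infty}\cdot\frac{Z_1}{Z_2}$ with $Z_1,Z_2$ independent standard normals, i.e.\ $\frac{2\sigma_G}{\sigma_\infty}\mathcal{C}(1)$.

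The main obstacle is twofold and lives entirely in the numerator. First, one must prove that the remainder $T^\beta e^{-\theta T}\int_0^T e^{\theta t}\bar\zeta_t\,\dif G_t$ vanishes in probability despite the competing growth $e^{\theta t}$ and decay $\bar\zeta_t\sim e^{-\theta t}$; this needs a careful second-moment estimate built from the covariance function and the H\"older bound $(\mathcal{A}_1)$/$(\mathcal{H}_1)$. Second, the asymptotic independence underlying the Cauchy limit must be made rigorous: since $(T^\beta\eta_T,\zeta_\infty)$ is jointly Gaussian, this reduces to showing that the rescaled cross-covariance $T^\beta\,\E[\eta_T\zeta_\infty]\to0$, which is exactly where $(\mathcal{A}_4)$ enters, after splitting $\zeta_\infty$ into an ``early'' part controlled by $(\mathcal{A}_4)$ and an exponentially small tail.
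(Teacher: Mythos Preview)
The paper does not supply its own proof of Theorem~\ref{Esseiby 2}: the result is quoted from \cite{Sebaiy2016} (El Machkouri, Es-Sebaiy, Ouknine), with only the remark that replacing $\beta=0$ by a general $\beta\ge0$ is a minor adaptation of that argument. Your outline is precisely the strategy of that reference---write $Y_t=e^{\theta t}\zeta_t$, use $(\mathcal H_1)$--$(\mathcal H_2)$ and integration by parts to get $\zeta_T\to\zeta_\infty=\theta\int_0^\infty e^{-\theta s}G_s\,\dif s$ a.s., deduce $e^{-2\theta T}\int_0^T Y_t^2\,\dif t\to\zeta_\infty^2/(2\theta)$ by a Toeplitz/Abelian lemma, and recover the Cauchy law from the ratio $2\theta\,T^\beta\eta_T/\zeta_\infty$ of asymptotically independent Gaussians---so there is no in-paper proof to compare against.

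One caveat worth flagging, which affects both your sketch and the paper's ``minor extension'' claim: to get asymptotic independence of $(T^\beta\eta_T,\zeta_\infty)$ you need $T^\beta\,\E[\eta_T G_s]\to0$ for each fixed $s$, not merely $\E[\eta_T G_s]\to0$ as stated in $(\mathcal A_4)$. In the concrete Examples~\ref{exmp 000001}--\ref{exmp lizi001} the covariance computations of Proposition~\ref{assum5} in fact give this stronger decay (the terms go to zero like a negative power of $t$, which beats $T^\beta$ with $\beta\le 1-H$), but as an abstract hypothesis $(\mathcal A_4)$ alone is, strictly speaking, not quite enough once $\beta>0$.
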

\begin{remark}
Theorem~\ref{Esseiby 2} is a minor extension of  \cite{Sebaiy2016} where they assume $\beta=0$.
\end{remark}

\section{{Proof of Proposition~\ref{prop 2-4 fenxilizi} }}\label{new-section}
{For the reader's convenience, we divide Section 3 into three subsections. In subsection~\ref{fenbujfgsh subsect} , we list two integration by parts formulas related to the measure $\nu_{g}$ (see Lemma~\ref{partial integral} and Lemma~\ref{prop 2-1}) from which the formula \eqref{jolis 00} can be simplified greatly. Subsection~\ref{gjzhenm sec.3.2} is devoted to the proof of Proposition~\ref{prop 2-4 fenxilizi}. In subsection~\ref{sub.sec.3.3}, we show Lemma~\ref{partial integral} and Lemma~\ref{prop 2-1}.} 
\subsection{some lemmas about integration by parts formula related to $\nu_{g}$}\label{fenbujfgsh subsect}

\begin{lemma}\label{partial integral}
Suppose $f,\,g$ are of normalized bounded variation on $[0,T]$ and the measure $\nu_g$ is given as above. If there are no points in $(0,T]$ where both $f$ and $g$ are discontinuous, then we have
 \begin{align}\label{01001}
	-\int_{[0,T]} g(t) \dif \mu_{f}(t)=\int_{[0,T]} f(t) \dif \nu_{ g}( t), 
\end{align}
where $\mu_{f}$ is the restriction to $([0,T], \mathcal{B}([0,T ]))$ of the Lebesgue-Stieltjes signed measure $\mu_{\bar{f}}$ on $\left(\R,\mathcal{B}(\R)\right)$ associated with $\bar{f}$ which is defined as
\begin{equation}\label{f01dingyi}
 \bar{f}(x)=\left\{
  \begin{array}{ll}
  0,  & \quad \text{if } x<-1,\\
   f(0)(x+1),    & \quad \text{if } x\in [-1,0),\\
    f(x), & \quad \text{if } x\in [0,T] , \\
   f(T), & \quad \text{if } x> T.
  \end{array}
  \right.
\end{equation} 
Especially, if $f$ is absolutely continuous on~$[0,T]$ then \eqref{01001} degenerates to 
\begin{align}   
-\int_0^T g(t) f'(t)\dif t=\int_{[0,T]} f(t) \dif\nu_{ g}( t). \label{01} \end{align}
\end{lemma}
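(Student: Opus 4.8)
The plan is to read \eqref{01001} as an instance of the integration-by-parts formula \eqref{int by parts Folland 00}, applied to the normalized bounded variation extensions of $f$ and $g$ to $\R$. I extend $f$ to $\bar f$ by \eqref{f01dingyi} and $g$ to $g^0$ by \eqref{g0dingyi}, so that $\mu_f=\mu_{\bar f}|_{[0,T]}$ and $\nu_g=\mu_{g^0}|_{[0,T]}$ by definition, while $\bar f=f$ and $g^0=g$ on $[0,T]$. The first, routine step is to check that the two sides of \eqref{01001} are unaffected by the extensions: $\mu_{g^0}$ lives on $[0,T]$, and although $\mu_{\bar f}$ also charges the ramp $[-1,0)$, there $g^0\equiv 0$, so $\int_{[0,T]}g\,\dif\mu_f=\int_{\R}g^0\,\dif\mu_{\bar f}$ and $\int_{[0,T]}f\,\dif\nu_g=\int_{\R}\bar f\,\dif\mu_{g^0}$. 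Thus \eqref{01001} is exactly \eqref{int by parts Folland 00} for the pair $(\bar f,g^0)$ with the two endpoint terms absent, and the remaining work is to justify their absence and to verify the hypothesis of \eqref{int by parts Folland 00}.

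At the left endpoint the term $F(0-)G(0-)$ vanishes because $g^0(0-)=0$, as already noted before the lemma. The purpose of the ramp in \eqref{f01dingyi} is to render $\bar f$ continuous at $0$ --- indeed $\bar f(0-)=f(0)=\bar f(0)$ --- which removes any spurious atom of $\mu_{\bar f}$ at $0$ and makes $\bar f$ and $g^0$ not simultaneously discontinuous there; this is why the hypothesis only needs to exclude common discontinuities on the half-open interval $(0,T]$. On the open interval $(0,T)$ the no-common-discontinuity requirement of \eqref{int by parts Folland 00} coincides with the hypothesis, since $\bar f=f$ and $g^0=g$ there.

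The delicate endpoint, and what I expect to be the \textbf{main obstacle}, is $T$. The zero extension makes $g^0$ jump at $T$ --- the measure $\nu_g$ must carry an atom $-g(T-)$ there to stay consistent with \eqref{jolis 00} --- and this jump is present even if $g$ is continuous at $T$, so $\bar f$ and $g^0$ can both be discontinuous at $T$ precisely when $f$ jumps at $T$. Hence \eqref{int by parts Folland 00} in its stated no-common-discontinuity form does not directly cover this point. I would circumvent this by using the refinement of the integration-by-parts formula in which one integrand is taken as its left limit: the left limit $g^0(t-)=g(t-)$ ignores the extension jump at $T$, so the boundary contribution at $T$ drops out, while the hypothesis that $f$ and $g$ are not both discontinuous on $(0,T]$ permits the replacement of $g(t-)$ by $g(t)$ at each atom of $\mu_f$ (a jump of $f$ forces continuity of $g$ there). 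Confirming that this substitution is valid at $T$ itself --- so that the atom of $\mu_f$ at $T$ contributes $g(T)\mu_f(\{T\})$ and no diagonal remainder is lost --- is the technical core of the proof.

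The absolutely continuous case is then immediate: when $f$ is absolutely continuous on $[0,T]$, the extension $\bar f$ is absolutely continuous, $\mu_f$ has density $f'$ and carries no atoms, so $\int_{[0,T]}g\,\dif\mu_f=\int_0^T g(t)f'(t)\,\dif t$ and \eqref{01001} reduces to \eqref{01}, consistently with the elementary formula \eqref{foll-jingidan}.
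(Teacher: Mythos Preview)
Your approach is correct, but it handles the endpoint $T$ differently from the paper. The paper avoids the artificial jump of $g^0$ at $T$ by a decomposition rather than by sharpening the integration-by-parts formula: it writes $g^0=g_1^0-g_2^0$, where $g_1^0$ extends $g$ by the constant value $g(T)$ for $x>T$ and $g_2^0$ is the pure jump $g(T)\mathbbm{1}_{\{x\ge T\}}$, so that $\mu_{g_2^0}=g(T)\delta_T$. Since $g_1^0$ is discontinuous at $T$ only when $g$ itself is, the hypothesis guarantees that $\bar f$ and $g_1^0$ have no common discontinuity on $[0,T]$, and the quoted formula \eqref{int by parts Folland 00} applies to that pair verbatim; the boundary term $\bar f(T)g_1^0(T)=f(T)g(T)$ is then recognized as $\int_{[0,T]}\bar f\,\dif g_2^0$ and subtracted to recover the $g^0$-integral. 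Your route---pushing both integrals to $\R$ so the boundary contributions sit at $\pm\infty$ and vanish, then invoking the one-sided identity $\int G(t-)\,\dif F+\int F\,\dif G=[FG]_{-\infty}^{\infty}$ and replacing $g^0(t-)$ by $g^0(t)$ at each atom of $\mu_{\bar f}$ via the hypothesis---is equally valid. What it costs is a slightly stronger integration-by-parts lemma than the one the paper actually cites (Folland's Exercise~3.34(b) assumes no common jump); what it buys is directness, with no auxiliary splitting of $g^0$. The paper's decomposition, by contrast, stays strictly within the already-quoted tool and makes the bookkeeping of the $T$-atom completely explicit.
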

\begin{remark}
The two measures $\mu_f$ and $\nu_{ g}$ are not symmetric in \eqref{01001}, which is due to the different extension methods of $f$ and $g$, see \eqref{f01dingyi} and \eqref{g0dingyi} respectively. The measure $\mu_{g^0}$ and hence its restriction $\nu_g$ may have atoms, i.e.,
  \begin{align}\label{wuyuanzi}
 \nu_g(\{0\})=g(0),\quad
   \nu_g(\{T\})=-g(T-),
 \end{align} but as a comparison, we have
   \begin{align}\label{wuyuanzi}
 \mu_f(\{0\})=0,\quad
   \mu_f(\{T\})=f(T)-f(T-),
 \end{align}
see the below proof for details. 
\end{remark}

To simplify the proof of  Proposition~\ref{prop 2-4 fenxilizi}, we first extract two common hypotheses (see Hypotheses~\ref{hypthe1-1} and \ref{hypthe2new-new}) from the covariance functions $R(s,t)$ of Examples~\ref{exmp 000001}-\ref{exmp lizi001} and then develop two inner product formulas for functions of bounded variation in the {canonical Hilbert spaces associated to} the two general Gaussian processes (see Proposition~\ref{prop 2-1}).

\begin{hyp}\label{hypthe1-1}
  The centred Gaussian process $G$ with $G_0=0$ has a covariance function $R(s,t)=E (G_sG_t)$ that satisfies the following conditions:
  \begin{enumerate}
    \item[($H_1$)] $R(s,t)$ is an absolutely continuous function of $t\in [0,T]$ for any fixed $s \in[0,T]$;
    \item[($H_2$)]  for almost every $t \in[0,T]$, the first-order partial derivative
      $$\frac {\partial }{ \partial t}  R(s,t)$$
      is a normalized bounded variation (NBV) function of $s\in  [0,T]$.
  \end{enumerate}
\end{hyp} 

\begin{hyp}\label{hypthe2new-new}
  The centred Gaussian process $G$ with $G_0=0$ has a covariance function $R(s,t)=E (G_sG_t)$ that satisfies  Hypothesis~($H_1$) and the following condition:
  \begin{enumerate}
    \item[($H_3$)] for almost every $t \in[0,T]$,
      the difference
      \begin{align}
        \frac {\partial R(s,t)}{\partial t} - \frac {\partial R^{B}(s,t)}{\partial t} \label{cha-key}
      \end{align}
      is a normalized bounded variation function of $s\in  [0,T]$, where $R^B(s,t)$ is the covariance function of the fractional Brownian motion (fBm) $B^H=\set{B^H_t:\, t\in[0,T ]}$ with Hurst parameter $H\in (0, \,1)$.
  \end{enumerate}
\end{hyp}
Hypothesis ($H_2$) and Hypothesis ($H_3$) 
respectively imply that for almost every $t \in[0,T]$, \begin{equation}\label{papr dereiv}
      \frac{\partial R(\cdot,t)}{  \partial t } \text{\quad or \quad}  \frac{\partial R(\cdot,t)}{\partial t} -  \frac{\partial R^B(\cdot,t)}{\partial t}  
  \end{equation} determines a Lebesgue-Stieltjes signed measure on $([0,T], \mathcal{B}([0,T ]))$ and hence the measure can be used to define Lebesgue-Stieltjes integral 
  (see Section~\ref{fenbujfgsh subsect}), where $\mathcal{B}([0,T ])$ is the
Borel $\sigma$-algebra on $[0,T]$. The second integral in the iterated integrals \eqref{innp fg3-zhicheng0-0} and \eqref{innp fg3-zhicheng0} are such Lebesgue-Stieltjes integral of the normalized bounded variation function. 

Lemma~\ref{prop 2-1} is devoted to two variations
on the inner product formulas for  the {canonical Hilbert space associated to} $G$ under Hypotheses~\ref{hypthe1-1} and \ref{hypthe2new-new}.
We need several notations to state it. Denote $\mathcal{V}_{[0,T]}$ the set of functions on $[0,T]$ with bounded variation. Denote by $m$ the Lebesgue measure on $\R$. Denote by $D(g)$ the discontinuous points of a function $g$ on $[0,T]$ and by $NBV[0,T]$ the set of normalized bounded variation functions on $[0, T]$. 
For any function $g$ on $[0,T]$, denote
\begin{align*}
    A_1(g)&:=\set{t\in [0,T]: D(g)\cap D\big(\frac {\partial R(\cdot,t)}{\partial t}\big)\neq \emptyset,\,\frac {\partial R(\cdot,t)}{\partial t} \in NBV[0,T]},\\
    A_2(g)&:=\set{t\in [0,T]: D(g)\cap D\big(\frac{\partial R(\cdot,t)}{\partial t} -  \frac{\partial R^B(\cdot,t)}{\partial t} \big)\neq \emptyset}\\
    &\quad \bigcap \set{t\in [0,T]:\frac{\partial R(\cdot,t)}{\partial t} -  \frac{\partial R^B(\cdot,t)}{\partial t} \in NBV[0,T]}.
\end{align*} Recall that the sets of Lebesgue measure zero are called null sets.

\begin{lemma}\label{prop 2-1}
 Let $f,\, g\in \mathcal{V}_{[0,T]}$ and $R(s,t)=\E[G_sG_t],\,R^B(s,t)=\E[B^H_sB^H_t]$.
  \begin{enumerate}
    \item[(i)] If the covariance function $R(s,t)$ satisfies Hypothesis~\ref{hypthe1-1} and if 
    the set $A_1(g)$ is of Lebesgue measure zero (i.e., $A_1(g)$ is a null set),
    then
      \begin{align} \label{innp fg3-zhicheng0-0}
        \langle f,\,g \rangle_{\FH}=\int_0^T  f(t)\dif t \int_0^T g(s)  \dif \big(  \frac{\partial R(s,t)}{  \partial t }\big).
      \end{align}
    \item[(ii)] If the covariance function $R(s,t)$ satisfies Hypothesis~\ref{hypthe2new-new} and if  
    the set $A_2(g)$ is of Lebesgue measure zero (i.e., $A_2(g)$ is a null set),
    then
      \begin{align} \label{innp fg3-zhicheng0}
        \langle f,\,g \rangle_{\FH}-\langle f,\,g \rangle_{\FH_1}=\int_0^T   f(t)\dif t \int_0^T g(s)\dif \left( \frac{\partial R(s,t)}{\partial t} -  \frac{\partial R^B(s,t)}{\partial t} \right),
      \end{align}
      where $\mathfrak{H}_1$ is the {canonical Hilbert space
 associated to the fBm $B^H$.}
  \end{enumerate}
\end{lemma}
Proofs of Lemma~\ref{partial integral} and Lemma~\ref{prop 2-1} are given in Subsection~\ref{sub.sec.3.3}. 
\subsection{Proof of Proposition~\ref{prop 2-4 fenxilizi}}\label{gjzhenm sec.3.2}
{\bf Proof of Proposition~\ref{prop 2-4 fenxilizi}:} We need only to check the validity of the conditions of Lemma~\ref{prop 2-1}. We divide it into two steps. Step 1 is to check Hypothesis~\ref{hypthe1-1} or Hypothesis~\ref{hypthe2new-new}; and Step 2 is to check $m(A_1(g))=0$ or $m(A_2(g))=0$.

Step 1. (i) For Example~\ref{exmp 000001}, it is clear that when $s\ge 0, t\ge 0$,
\begin{align*}
    R(s,t)=H\abs{2H-1} \int_0^t \dif v\int_0^s (v+u)^{2H-2}\dif u,
\end{align*}
and when $t> 0$
\begin{align*}
    \frac{\partial R(s,t)}{  \partial t }=H\abs{2H-1}\int_0^s (t+u)^{2H-2}\dif u=\left\{
  \begin{array}{ll}
    H \big(t^{2H-1}- (t+s)^{2H-1}\big),          & \quad H\in (0,\,\frac12), \\
     H \big((t+s)^{2H-1}-t^{2H-1}\big), & \quad H\in (\frac12,\,1),
  \end{array}
  \right.
\end{align*}which implies that $R(s,t)$ is an absolutely continuous function on $t\in [0,T]$ for any fixed $s \in[0,T]$ and when $t>0$, the function $ \frac{\partial R(s,t)}{  \partial t }$ is an absolutely continuous function on $s\in [0,T]$. Hence, 
Examples~\ref{exmp 000001} satisfies Hypothesis~\ref{hypthe1-1}.

For Examples~\ref{exmp 000001-04}, it is clear that 
\begin{align*}
    R(s,t)&=(2H')^2 K (1-K) \int_0^t v^{2H'-1}\dif v\int_0^s (v^{2H'}+u^{2H'})^{K-2}u^{2H'-1}\dif u, \quad s,t\ge 0,\\
     \frac{\partial R(s,t)}{  \partial t }&=(2H')^2 K (1-K) t^{2H'-1} \int_0^s (t^{2H'}+u^{2H'})^{K-2}u^{2H'-1}\dif u, \quad \quad t>0,
\end{align*} which implies that  Examples~\ref{exmp 000001-04} satisfies Hypothesis~\ref{hypthe1-1} 
in the same vein.

 (ii) For Example~\ref{exmp 000001-06}, it is clear that
 \begin{align*}
     R(s,t)=\int_0^t \frac {\partial }{ \partial v}  R(s,v)\dif v,
 \end{align*}
and for $t>0$, the first-order partial derivative
  \begin{align*}
  \frac {\partial }{ \partial t}  R(s,t)&=\left\{
        \begin{array}{ll}
  H(t+s)^{2H-1}-H t^{2H-1}, & \quad 0\le s< t,\\
  H(t+s)^{2H-1}, & \quad t<s\le T
   \end{array}
  \right. \\
  &=\varphi(s)+\phi(s),\quad s\neq t,
  \end{align*}
where 
\begin{equation}\label{varphislizi}
  \varphi(s)
  =H(2H-1)\int_0^s (t+u)^{2H-2} \dif u,\quad s\in[0,T],
\end{equation} is an absolutely continuous function and 
\begin{equation}\label{phislizi}
  \phi(s)=\left\{
  \begin{array}{ll}
    0,          & \quad 0<s\le t, \\
    H t^{2H-1}, & \quad t<s\le T,
  \end{array}
  \right.
\end{equation} is a step function. Hence, $R(s,t)$ is an absolutely continuous function on $t\in [0,T]$ for any fixed $s \in[0,T]$, and for $t> 0$, $\frac {\partial }{ \partial t}  R(s,t)$ is a bounded variation function on $s\in [0,T]$.

(iii) For Example~\ref{exmp 000001-06-buch}, it is clear that
 \begin{align*}
     R(s,t)=\int_0^t \frac {\partial }{ \partial v}  R(s,v)\dif v,
 \end{align*}
and for $t>0$, the first-order partial derivative
  \begin{equation}\label{exmp 000001-06-piandao-new}
  \frac {\partial }{ \partial t}  R(s,t)=\left\{
        \begin{array}{ll}
  0 , & \quad 0<s\le t,\\
  2H\Gamma(1-2H) t^{2H-1}, & \quad t<s\le T
   \end{array}
  \right. 
  \end{equation} 
  is a pure jump function on $s\in [0,T]$. 

The fBm $B^H$ is involved in Examples~\ref{exmp0005}-~\ref{exmp lizi001}. Recall that the covariance function of $B^H$ is given by
 $$R^B(s,t)=\frac{1}{2}[s^{2H}+t^{2H}-\abs{t-s}^{2H}].$$
 It is clear that  \begin{align*}
     R^B(s,t)=\int_0^t \frac {\partial }{ \partial v}  R^B(s,v)\dif v,
 \end{align*}
and for $t>0$, the first-order partial derivative is given by
  \begin{equation*}
  \frac {\partial }{ \partial t}  R^B(s,t)=H\big[t^{2H-1}-\abs{t-s}^{2H-1}\sgn(t-s)\big],
  \end{equation*} which is of bounded variation when $H\in (\frac12, 1)$ and is not of  bounded variation when $H\in (0,\,\frac12)$.\\
(iv) For Example~\ref{exmp0005}, it is clear that for $s,t\ge 0$,
\begin{align*}
    R(s,t)=\int_0^t \frac{\partial R(s,v)}{\partial v} \dif v,  
\end{align*}
and for $t>0$, the first-order partial derivative is given by
  \begin{equation*}
  \frac {\partial }{ \partial t}  R(s,t)=H\big[2(s^{2H'}+t^{2H'})^{K-1}t^{2H'-1}-(t+s)^{2H-1}-\abs{t-s}^{2H-1}\sgn(t-s)\big] ,
 \end{equation*} which is of bounded variation when $H=H'K\in (\frac12, 1)$ and is not of bounded variation when $H\in (0,\,\frac12)$.

The first-order partial derivative of the difference function \eqref{cha-key} satisfies when $t>0$,
\begin{align}
  \quad&\frac{\partial R(s,t)}{\partial t  }- \frac{\partial  R^B(s,t)}{\partial t  }\nonumber\\
  &=H \big[ 2(s^{2H'}+t^{2H'})^{K-1}t^{2H'-1}-(t+s)^{2H-1}-t^{2H-1}\big]\notag\\
  &=H\int_0^s  \big[4H'(K-1)(u^{2H'}+t^{2H'})^{K-2}(tu)^{2H'-1} -(2H -1)(u+t)^{2H -2}\big]\dif u. \label{guanjiandengshiexmp1-5}
\end{align}
Therefore, $R(s,t)$ is an absolutely continuous function of $t\in [0,T]$ for any fixed $s \in[0,T]$ and when $t>0$, the difference function $ \frac{\partial R(s,t)}{\partial t  }- \frac{\partial  R^B(s,t)}{\partial t  }$ is an absolutely continuous function of $s\in [0,T]$ when $H\in (0,1)$. Hence, 
Examples~\ref{exmp0005} satisfies Hypothesis~\ref{hypthe2new-new}.

For Example~\ref{exmp7-1zili},  it is clear that for $s,t\ge 0$,
\begin{align*}
    R(s,t)=\int_0^t \frac{\partial R(s,v)}{\partial v} \dif v,  
\end{align*}
and for $t>0$, the first-order partial derivative is given by
\begin{align*}
 \frac{\partial R(s,t)}{\partial t}
 =H\left[\frac{(a+b)^2}{a^2+b^2}t^{2H-1}-\frac{2ab}{a^2+b^2}(t+s)^{2H-1}-|t-s|^{2H-1}\sgn(t-s)\right].
\end{align*} 
Thus, the first-order partial derivative of the difference function \eqref{cha-key} satisfies 
\begin{align}
  \frac{\partial R(s,t)}{\partial t  }- \frac{\partial  R^B(s,t)}{\partial t  } = -\frac{2H(2H-1)ab}{a^2+b^2} \int_0^s (t+u)^{2H-2} \dif u, \label{guanjiandengshiexmp1-6}
\end{align} which implies Examples~\ref{exmp7-1zili} satisfies Hypothesis~\ref{hypthe2new-new} 
in the same vein.

(v) For Example~\ref{exmp lizi001}, it clear that for $s,t\ge 0$,
\begin{align*}
     R(s,t)=\int_0^t \frac{\partial R(s,v)}{\partial v} \dif v,  
\end{align*}
and for $t>0$, the first-order partial derivative is given by
\begin{align*}
  \frac{\partial R(s,t)}{\partial t  }=\left\{
  \begin{array}{ll}
    H\left[t^{2H-1}-(t-s)^{2H-1}\right],           & \quad 0<s\le t, \\
    H (s-t)^{2H-1}, & \quad t<s\le T.
  \end{array}
  \right.
\end{align*}
Hence, the first-order partial derivative of the difference function \eqref{cha-key} satisfies 
\begin{equation}\label{guanjiandengshi 000-new1000}
  \frac{\partial R(s,t)}{\partial t  }- \frac{\partial  R^B(s,t)}{\partial t  }=\left\{
  \begin{array}{ll}
    0,           & \quad 0<s\le t, \\
    -H t^{2H-1}, & \quad t<s\le T
  \end{array}
  \right.
\end{equation}
is a step function. 

Step 2.
{ 
From the above calculations, we find that when $t>0$,  $\frac{\partial R(\cdot,t)}{\partial t}\in NBV[0,T]$ for Examples~\ref{exmp 000001}-\ref{exmp 000001-06-buch} and $\frac{\partial R(\cdot,t)}{\partial t} -  \frac{\partial R^B(\cdot,t)}{\partial t} 
      \in NBV[0,T]$ for Examples~\ref{exmp0005}-\ref{exmp lizi001}; and when $t>0$,  the set of discontinuous points of $\frac {\partial R(\cdot,t)}{\partial t}$ is
      \begin{align*}
         D\big(\frac {\partial R(\cdot,t)}{\partial t}\big)
        = \left\{
  \begin{array}{ll}
    \emptyset,       &  \text{for Examples~\ref{exmp 000001}-\ref{exmp 000001-04}}, \\
   \set{t}, & \text{for Examples~\ref{exmp 000001-06}-\ref{exmp 000001-06-buch}},
  \end{array}
  \right.
      \end{align*} and  the set of discontinuous points of $\frac {\partial R(\cdot,t)}{\partial t} -  \frac{\partial R^B(\cdot,t)}{\partial t}$ is
     \begin{align*}
         D\big(\frac{\partial R(\cdot,t)}{\partial t} -  \frac{\partial R^B(\cdot,t)}{\partial t}\big)
                = \left\{
  \begin{array}{ll}
    \emptyset,       &  \text{for Examples~\ref{exmp0005}-\ref{exmp7-1zili}}, \\
   \set{t}, & \text{for Examples~\ref{exmp lizi001}},
  \end{array}
  \right.
      \end{align*} 
which implies that
\begin{align*}
  &A_1(g)\subset{D(g)},\quad \text{for Examples~\ref{exmp 000001}-\ref{exmp 000001-06-buch}},\\
  &A_2(g)\subset{D(g)},\quad \text{for Examples~\ref{exmp0005}-\ref{exmp lizi001}}.
\end{align*} Since $g$ is of bounded variation, we have that $D(g)$ is a null set. 
Therefore,  $A_1(g)$ is a null set for Examples~\ref{exmp 000001}-\ref{exmp 000001-06-buch} and $ A_2(g)$ is null set for Examples~\ref{exmp0005}-\ref{exmp lizi001}.

Therefore, it follows from Proposition~\ref{prop 2-1} that the inner product formulas \eqref{diyineijigongshi 00}-\eqref{guanjiandengshi 000-new1} hold for Examples~\ref{exmp 000001}-\ref{exmp lizi001} respectively.  
  {\hfill\large{$\Box$}} 
\subsection{Proof of Lemma~\ref{partial integral} and Lemma~\ref{prop 2-1}}\label{sub.sec.3.3}
{In this subsection, we will use measure theory (especially, Lebesgue-Stieltjes signed measure) intensively for the proof of Lemma~3.1 and Lemma~3.3.}
Let us firstly recall some basic facts of measure theory which are necessary to the proof. $F$ is an absolutely continuous function on $t\in [0,T]$ if and only if $F$ is differentiable almost everywhere on $[0,T],\, F'\in L^1[0,T]$ and $F(x)-F(0)=\int_0^x F'(t)\dif t$; every bounded variation function is differentiable almost everywhere and is the difference of two bounded monotone functions. A basic jump function $J_{x_0}$ is a
function of the form
\begin{equation*}
J_{x_0}(x):=\left\{
      \begin{array}{lll}
      0,&\quad \text{when } x<x_0,\\
      a,&\quad \text{when } x=x_0,\\
1, &\quad \text{when } x>x_0   
 \end{array}
\right.
\end{equation*} 
for some $x_0\in \R$ and $0 \le a \le  1$. The function $J_{x_0}$ is also denoted by $\delta_{x_0}$ and are called the point mass at $x_0$.
A bounded variation function $F$ is said to be normalized if it is also right continuous on $\R$ and  $F(-\infty)=0$. See \cite{Foll99}.

For a normalized bounded variation function $g$ on $\R$, denote $\mu_g$ the signed Lebesgue-Stieltjes measure such that $\mu_g ((-\infty, x])=g(x)$. As usual, the Lebesgue-Stieltjes integral of a Borel measurable function $f$ with respect to the measure $\mu_{g}$
\begin{align}\label{benwenjifen}
    \int_{\R} f(x)\dif \mu_{g}(x)
\end{align}
is often abbreviated 
\begin{align}\label{benwenjifen 00}
    \int_{\R} f(x)\dif {g}(x).
\end{align}
When $g$ is absolutely continuous on~$\R$, the measure $\mu_g$ is absolutely continuous and the integral \eqref{benwenjifen 00} equals to 
$$\int_{\R} f(x) {g}'(x)\dif x.$$

For two absolutely continuous functions $f$ and $g$  on~$\R$, the well-known integration result (see \cite[p.108]{Foll99}) yields 
\begin{align}\label{foll-jingidan}
    -\int_0^T g(x) f'(x)\dif x=\int_0^T f(x) g'(x)\dif x+f(0)g(0)-f(T)g(T).
\end{align}
In general, if $F,G$ are of normalized bounded variation on $\R$ 
and if there are no points in $[0,T]$ where $F$ and $G$ are both discontinuous, then Exercise 3.34 (b) of \cite{Foll99} gives
\begin{align}\label{int by parts Folland 00}
 -\int_{[0,T]} G(x) \dif  F(x)=\int_{[0,T]} F(x)\dif G( x)-F(T)G(T)+F(0-)G(0-),
\end{align} where either $\mu_F$ or $\mu_G$ may has atom at endpoint $0$ or $T$. Especially, when $\mu_F$ and $\mu_G$ are absolutely continuous, the formula \eqref{int by parts Folland 00} degenerates to \eqref{foll-jingidan}.

To apply the formulas \eqref{foll-jingidan} and \eqref{int by parts Folland 00} to the case when $F,G$ are of normalized bounded variation on $[0,T]$, we need to firstly extend $F,G$ to $\R$ and then illustrate carefully the associated measures in \eqref{int by parts Folland 00},    since different extensions of $F,G$ lead to different mass on endpoints and thus different integral value, see (\ref{g0dingyi}) and (\ref{f01dingyi}) for example. In this paper, we choose  \eqref{g0dingyi} to extend the bounded variation function $G$ on $[0,T]$ to $\R$, which is cited from \cite{Jolis 2007}. Since $G(0-)=0$, the third term of the right side of \eqref{int by parts Folland 00} vanish. 

{ We are in a position to show Lemma~\ref{partial integral} and Lemma~\ref{prop 2-1}.}

{\bf Proof of Lemma~\ref{partial integral}:}
 We firstly describe the measure $\nu_{g}$ from a measure decomposition viewpoint (see \cite{cl2023}). The function $g^0(x)$ defined by \eqref{g0dingyi} is the difference of two bounded variation functions:
\begin{align}\label{basic decomp}
g^0(x)=g_1^0(x)-g_2^0(x),
\end{align}
where 
\begin{equation*}\label{nvb yantuo}
g_1^0(x)=\left\{
      \begin{array}{ll}
      0,&\quad \text{if } x<0,\\
 g(x), & \quad \text{if } x\in [0,T],\\
g(T), &\quad \text{if } x>T,    
 \end{array}
\right.
\end{equation*}
and 
\begin{equation*}
g_2^0(x)=\left\{
      \begin{array}{ll}
      0,&\quad \text{if } x<T,\\
g(T), &\quad \text{if } x>T.    
 \end{array}
\right.
\end{equation*}
Therefore, 
\begin{align}\label{ceducha}
\mu_{g^0} =\mu_{g^0_1}-\mu_{g^0_2}, 
\end{align} where 
\begin{equation}
\mu_{g^0_2}=g(T) \delta_T(\cdot),\label{g02 cedu}
\end{equation} 
and $\delta_T(\cdot)$ represents the Dirac measure at $T$. 


 Next, we extend $f$ to $\R$ as \eqref{f01dingyi}. Together with the assumption $f$ being normalized, we have $0$ is a continuous point of $\bar{f}$. Therefore, the identity \eqref{int by parts Folland 00} implies that if there are no points in $(0,T]$ where both $f$ and $g$ are discontinuous then
\begin{align*}
    -\int_{[0,T]} g(t) \dif \mu_{f}(t)&= -\int_{[0,T]} g_1^0(t) \dif \bar{f}(t)\notag\\
    &=\int_{[0,T]} \bar{f}(t) \dif g_1^0(t)-\bar{f}(T)g_1^0(T)+\bar{f}(0-)g_1^0(0-)\notag\\
    &=\int_{[0,T]} \bar{f}(t) \dif g_1^0(t)-\bar{f}(T)g_1^0(T)\\
    &=\int_{[0,T]} \bar{f}(t) \dif g_1^0(t)-\int_{[0,T]} \bar{f}(t) \dif g_2^0(t)\\
    &=\int_{[0,T]} \bar{f}(t) \dif  g^0( t) =\int_{[0,T]} f(t) \dif \nu_{ g}( t).
\end{align*}
 {\hfill\large{$\Box$}}

{\bf Proof of Lemma~\ref{prop 2-1}:}
We  only need to show (i) since (ii) is similar. The identity \eqref{jolis 00} together with Fubini's theorem implies that $\forall f,\, g\in \mathcal{V}_{[0,T]}$, \begin{align}  \label{inner product 001}
\innp{f,g}_{\FH}
=\int_{[0,T]} \Big( \int_{[0,T]}  R(t,s) \dif  \nu_f(t)  \Big)  \dif\nu_{g}( s).
\end{align}
 
 By Hypothesis~\ref{hypthe1-1} ($H_1$),   $R(s,t)$ is an absolutely continuous function on $t\in [0,T]$ for any fixed $s \in[0,T]$, then 
$\frac{\partial R(s,t)}{\partial t}$ exists almost everywhere on $t\in [0,T]$ for any fixed $s \in[0,T]$.
Applying the formula \eqref{01} to the function $R(\cdot, s)$ and $f(\cdot)$, we have  
  \begin{align} \label{zhjbuzhou 1}
    \langle f,\,g \rangle_{\FH}=-\int_{[0,T]} \Big(  \int_{[0,T]}f(t) \frac{\partial R(s,t)}{\partial t}  \dif t \Big)\dif \nu_{g}( s)=-\int_{[0,T]}  f(t)  \dif t \Big(\int_{[0,T]}\frac{\partial R(s,t)}{\partial t} \dif \nu_{g}( s)\Big).  
  \end{align}
Under Hypothesis~\ref{hypthe1-1} ($H_2$), the function
     $ \frac {\partial }{ \partial t}  R(s,t)$
      is a normalized bounded variation function of $s\in  [0,T]$ for  any fixed $t \in[0,T]$ almost everywhere. That is to say, 
\begin{equation*}
    m\Big(\set{t\in [0,T]:\frac {\partial }{ \partial t}  R(\cdot,t)
      \notin NBV[0,T]} \Big)=0,
\end{equation*} which, together with the condition $m(A_1(g))=0$, implies that the set 
\begin{equation*}
    B:=A_1(g)\bigcup \set{t\in [0,T]:\frac {\partial }{ \partial t}  R(\cdot,t)
      \notin NBV[0,T]}.
\end{equation*} is a null set, i.e.,  $m(B)=0$.
Hence 
\begin{equation}\label{zhjgdujieguo 00}
    \int_{[0,T]}  f(t)  \dif t \Big(\int_{[0,T]}\frac{\partial R(s,t)}{\partial t} \dif \nu_{g}( s)\Big)=\int_{[0,T]\setminus B}  f(t)  \dif t \Big(\int_{[0,T]}\frac{\partial R(s,t)}{\partial t} \dif \nu_{g}( s)\Big)
\end{equation}

For any fixed $t\in [0,T]\setminus B$, the function $\frac {\partial R(\cdot,t)}{\partial t}$ is of normalized bounded variation and $g(\cdot)$ has no common discontinuous points with the function $\frac {\partial R(\cdot,t)}{\partial t}$, and hence Lemma~\ref{partial integral} implies that 
\begin{align}\label{zhjjg 000}
   \int_{[0,T]}\frac{\partial R(s,t)}{\partial t} \dif \nu_{g}( s)=-\int_{0}^T g(s)\dif \big( \frac {\partial R(s,t)}{\partial t}\big). 
\end{align}      
 Substituting \eqref{zhjgdujieguo 00}-\eqref{zhjjg 000} into \eqref{zhjbuzhou 1} yields the integration by parts formula:
  \begin{align}
    \langle f,\,g \rangle_{\FH} & =\int_0^T  f(t) \dif t \int_0^T g(s) \dif \left(  \frac{\partial R(s,t)}{  \partial t }\right).
  \end{align}
 {\hfill\large{$\Box$}}

\section{Proof of {Theorem~\ref{asymptoticth}}}\label{proof mn results}
\begin{lemma} \label{upper bound F}
  Assume $\beta>0$ and $\theta>0$. Denote $$ {A}(s)=e^{-\theta s}\int_0^{s} e^{\theta r} r^{\beta -1}\dif r. $$ Then there exists a positive constant $C$ such that for any  $s\in [0,\infty)$,
  \begin{align*}
    {A}(s) & \le C \times\left(s^{\beta}\mathbbm{1}_{[0,1]}(s) + s^{\beta-1}\mathbbm{1}_{ (1,\,\infty)}(s)\right)\le  C \times \left(s^{\beta-1} \wedge s^{\beta} \right).
  \end{align*}
  Especially, when $\beta\in (0,1)$, there exists a positive constant $C$ such that for any  $s\in [0,\infty)$,
  \begin{align*}
    {A}(s) & \le C \times(1\wedge s^{\beta-1}).
  \end{align*}
\end{lemma}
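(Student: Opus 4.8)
The plan is to estimate $A(s)$ separately on the two regimes $s\in[0,1]$ and $s>1$, since the second displayed inequality and the special case $\beta\in(0,1)$ both follow from the first by elementary comparisons of powers. The only genuine feature to keep track of is that the integrand $e^{\theta r}r^{\beta-1}$ concentrates its mass near $r=s$ because of the exponential weight, so the bound should reflect the local behaviour of $r^{\beta-1}$ there. Note that the hypothesis $\beta>0$ is precisely what makes $\int_0^s r^{\beta-1}\dif r=s^{\beta}/\beta$ finite, so the singularity of $r^{\beta-1}$ at the origin is harmless throughout.

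For the small-$s$ regime I would simply use $e^{\theta r}\le e^{\theta}$ and $e^{-\theta s}\le 1$ on $[0,1]$, which gives
\begin{align*}
A(s)\le \int_0^s e^{\theta r}r^{\beta-1}\dif r\le e^{\theta}\int_0^s r^{\beta-1}\dif r=\frac{e^{\theta}}{\beta}\,s^{\beta},\qquad s\in[0,1],
\end{align*}
the $s^{\beta}\mathbbm{1}_{[0,1]}(s)$ part of the claim. For the large-$s$ regime I would split the integral at $s/2$. On $[0,s/2]$ the bound $e^{\theta r}\le e^{\theta s/2}$ produces a factor $e^{-\theta s/2}(s/2)^{\beta}/\beta$, which for $s>1$ is dominated by $C s^{\beta-1}$ because $s\,e^{-\theta s/2}$ is bounded on $[1,\infty)$. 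On $[s/2,s]$ the monotone factor obeys $r^{\beta-1}\le C_\beta s^{\beta-1}$ with $C_\beta=\max(1,2^{1-\beta})$, so that
\begin{align*}
e^{-\theta s}\int_{s/2}^{s}e^{\theta r}r^{\beta-1}\dif r\le C_\beta s^{\beta-1}\,e^{-\theta s}\int_{s/2}^{s}e^{\theta r}\dif r\le \frac{C_\beta}{\theta}\,s^{\beta-1}.
\end{align*}
Adding the two pieces yields $A(s)\le C s^{\beta-1}$ for $s>1$, completing the first displayed inequality. Equivalently, the substitution $r=s(1-v)$ rewrites $A(s)=s^{\beta}\int_0^1 e^{-\theta s v}(1-v)^{\beta-1}\dif v$, a Laplace-type integral whose leading contribution near $v=0$ is $s^{\beta-1}/\theta$; either route works.

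The remaining assertions are then immediate comparisons of powers. Since $s^{\beta}\le s^{\beta-1}$ for $s\le1$ and $s^{\beta-1}\le s^{\beta}$ for $s>1$, the first bound rewrites as $A(s)\le C(s^{\beta-1}\wedge s^{\beta})$. Finally, when $\beta\in(0,1)$ one has $s^{\beta}\le 1$ on $[0,1]$ and $s^{\beta-1}\le1$ on $(1,\infty)$, so the two branches combine into $A(s)\le C(1\wedge s^{\beta-1})$. I expect no deep obstacle here; the only point requiring care is the bookkeeping for $r^{\beta-1}$ on $[s/2,s]$ when $\beta<1$, where the factor is largest at the left endpoint $r=s/2$ rather than at $r=s$, which is exactly what the constant $C_\beta=\max(1,2^{1-\beta})$ absorbs. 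Once that is handled, the exponential decay supplies everything else.
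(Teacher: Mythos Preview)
Your argument is correct. The paper itself does not prove this lemma at all; it simply cites Lemma~3.3 of \cite{chenzhou2021}, so your self-contained proof actually supplies more than the paper does. The split at $s/2$ together with the observation that $s\,e^{-\theta s/2}$ is bounded on $[1,\infty)$ is exactly the standard way to handle such Laplace-type integrals, and your bookkeeping for the monotonicity of $r^{\beta-1}$ in both cases $\beta\ge1$ and $\beta<1$ via the constant $C_\beta=\max(1,2^{1-\beta})$ is clean. One minor remark: the quantity $s^{\beta}\mathbbm{1}_{[0,1]}(s)+s^{\beta-1}\mathbbm{1}_{(1,\infty)}(s)$ is in fact \emph{equal} to $s^{\beta-1}\wedge s^{\beta}$, so the second inequality in the lemma is an identity (up to the constant $C$), which you implicitly noted.
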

{The proof of Lemma \ref{upper bound F} is trivial.  See \cite{chenzhou2021} for more details. 
\begin{lemma}\label{lem.4.2}
The covariance functions $R(s,t)$ of  Examples~\ref{exmp 000001}-\ref{exmp lizi001} satisfy four types of conditions. 
\begin{enumerate}
  \item[(T1)]
    Examples~\ref{exmp 000001}-\ref{exmp 000001-04} satisfy the following conditions:
    \begin{enumerate}  
    \item[($H_2'$)]  for almost every $t\in [0,T]$, the partial derivative
      $\frac {\partial R(s,t)}{\partial t} $
      is an absolutely continuous function of $s\in [0,T]$.
    \item[($H_4$)] 
        There exist constants $C_1,C_2\ge 0$ which depend only on $H',\,K$ such that the inequality
        \begin{equation}\label{phi2-new}
          \left| \frac{\partial}{\partial s}\left(\frac {\partial R(s,t)}{\partial t} \right)\right| \le  C_1  (t+s)^{2H-2}+C_2 (s^{2H'}+t^{2H'})^{K-2}(st)^{2H'-1}
        \end{equation}
        holds, where $H'>0,\, K\in (0,2)$ and $H:=H'K\in (0, 1)$.
    \end{enumerate}
  \item[(T2)]  
    Examples~\ref{exmp 000001-06}-\ref{exmp 000001-06-buch} satisfy  ($H_2$)  but {do} not satisfy  ($H_2'$).
  \item[(T3)] Examples~\ref{exmp0005}-\ref{exmp7-1zili} satisfy the following conditions:
    \begin{enumerate}
    \item[  ($H_{3}'$)] for almost every $t\in [0,T]$, the difference 
      $ \frac {\partial R(s,t)}{\partial t} - \frac {\partial R^{B}(s,t)}{\partial t}$ is an absolutely continuous function on $s\in [0,T]$.
      \item[($H_5$)]There exist constants $C_1,C_2\ge 0$ which depend only on $H',\,K$ such that the inequality
      \begin{equation}\label{phi2}
        \left| \frac{\partial}{\partial s}\left(\frac {\partial R(s,t)}{\partial t} - \frac {\partial R^{B}(s,t)}{\partial t}\right)\right| \le  C_1  (t+s)^{2H-2}+C_2 (s^{2H'}+t^{2H'})^{K-2}(st)^{2H'-1}
      \end{equation}
      holds, where $H'>0,\, K\in (0,2)$ and $H:=H'K\in (0, 1)$.
    \end{enumerate}
  \item[(T4)] Example~\ref{exmp lizi001} satisfies ($H_3$) but does not satisfy ($H_3'$).
\end{enumerate}
\end{lemma}}
\begin{remark}
The bi-fractional Brownian motion also satisfies the condition (T3). 
Recall that the covariance function of the bi-fractional Brownian motion $B^{H',K}$ with $H'>0,K \in (0, 2)$ and $H:=H'K\in (0, 1)$ is as follows:
      $$R(t,s)=\frac{1}{2}\left((s^{2H'}+t^{2H'})^K - |t-s|^{2H'K}\right). $$
See \cite{Bardina2011} for the case of $K\in (1, 2)$.
When the parameter of  $B^{H',K}$ is restricted to $H'\in (0,1),\,K\in (0,1]$, the statistical estimations for the Non-Vasicek model is studied in \cite{Sebaiy22021}. 
\end{remark}
{Proof of Lemma~\ref{lem.4.2} is included in the proof of Proposition 1.10, see Subsection~\ref{gjzhenm sec.3.2} for more details. 
\begin{lemma}\label{bds lem.2}
The right-hand sides of \eqref{phi2-new}-\eqref{phi2} have a upper bound: \begin{align}
 C_1  (t+s)^{2H-2}+C_2 (s^{2H'}+t^{2H'})^{K-2}(st)^{2H'-1}
 &\le   C\times (ts)^{H-1}.\label{uppppper}
\end{align}
\end{lemma} Lemma~\ref{bds lem.2} is trivial since $H'>0,\, K\in (0,2)$ and $H=H'K\in(0,1)$, and we ignore its proof.}
The following propositions are devoted to verify the conditions $(\mathcal{A}_1)$ and $(\mathcal{A}_3)$-$(\mathcal{A}_5)$ of Theorem~\ref{Eseiby theorem}, respectively. 
\begin{proposition}\label{proop3-1}
{Let the Gaussian process $G$ be taken from Examples~\ref{exmp 000001}-\ref{exmp lizi001}.
Then} there exists a positive constant $C$ that is independent of $T$, satisfying
  \begin{align}\label{gima2fang jie}
    \sigma^2(s,t):=\E\big[(G_s-G_t)^2\big]\le C\abs{s-t}^{2H},\quad 0\le s, t\le T,
  \end{align}where $\sigma^2(s,t)$ and $\sigma(s,t)$ are called the structure function and canonical metric for $G$, {respectively}.
\end{proposition}
\begin{proof}
  Suppose $0\le s<t\le T$. By taking $g(\cdot)=\mathbbm{1}_{[s,t]}(\cdot)$, we have
  \begin{align}\label{kaishidian}
     \sigma^2(s,t)=\E\big[(G_s-G_t)^2\big]=\norm{g}_{\FH}^2,\qquad \E\big[(B^H_s-B^H_t)^2\big]=\norm{g}_{\FH_1}^2. 
  \end{align}  
It is well-known that for the structure function of the fBm $B^H$ is
  \begin{align}\label{wellknownn}
    \E\left[(B^H_s-B^H_t)^2\right]=\norm{g}_{\FH_1}^2=\abs{s-t}^{2H}.
  \end{align}
  (i) The inner product \eqref{diyineijigongshi 00} of Examples~\ref{exmp 000001}-\ref{exmp 000001-04} implies that
  \begin{align}\label{gnorm2 diyilizi}
    \norm{g}_{\FH}^2 & =\int_{[s,t]^2}  \frac {\partial^2 R(u,v)}{\partial u\partial v} \dif u\dif v.
  \end{align}
  The inequalities \eqref{phi2-new} and \eqref{uppppper} imply that 
  \begin{align*}
      \abs{\frac {\partial^2 R(u,v)}{\partial u\partial v} }\le C(uv)^{H-1},
  \end{align*}where $C$ is a positive constant independent of $T$. Substituting it into \eqref{gnorm2 diyilizi}, we have 
\begin{align*}
    \sigma^2(s,t)\le C  \int_{[s,t]^2}  (uv)^{H-1}\dif u\dif v \le C(t-s)^{2H}.
\end{align*}
(ii) The inner product formula \eqref{guanjiandengshi 000-new1-000} of Example~\ref{exmp 000001-06} implies that
  \begin{align}\label{gnorm2 di2 lizi}
   \norm{g}^2_{\FH}= H(2H-1) \int_{[s,t]^2}  (u+v)^{2(H-1)}\dif u\dif v + H\int_{s} ^t u^{2H-1}\dif u.  
  \end{align}
Since $H\in (0,\frac12)$, the first term of the right hand side of \eqref{gnorm2 di2 lizi} is negative. Therefore, 
  \begin{align*}
  \norm{g}^2_{\FH} & \le H\int_{s} ^t u^{2H-1}\dif u= \frac12 t^{2H}\left(1-\left(\frac{s}{t}\right)^{2H}\right) \le \frac12(t-s)^{2H}.
  \end{align*}
  In the same vein, the inner product formula \eqref{guanjiandengshi 000-new1-000-buch} of Example~\ref{exmp 000001-06-buch} implies that
  \begin{align*}
   \norm{g}^2_{\FH} & = 2H\Gamma(1- 2H)\int_{s} ^t u^{2H-1}\dif u  \le \Gamma(1-2H) (t-s)^{2H}.
  \end{align*}Substituting the above two inequalities into \eqref{kaishidian}, we have that \eqref{gima2fang jie} holds for Example~\ref{exmp 000001-06} and \ref{exmp 000001-06-buch}.\\
  (iii) The inner product formula \eqref{dierneijigongshi 00} of Examples~\ref{exmp0005}-\ref{exmp7-1zili}  implies that
  \begin{align}\label{di34lizichfd}
    \norm{g}_{\FH}^2 -\norm{g}_{\FH_1}^2& ={\int_{[s,t]^2} \frac{\partial}{\partial u}\left( \frac{\partial R(u,v)}{\partial v} -  \frac{\partial R^B(u,v)}{\partial v} \right)\dif u\dif v}.
  \end{align}The inequalities \eqref{phi2} and \eqref{uppppper} imply that 
  \begin{align*}
      \abs{\frac {\partial^2 R(u,v)}{\partial u\partial v} -\frac {\partial^2 R^B(u,v)}{\partial u\partial v}}\le C(uv)^{H-1},
  \end{align*}where $C$ is a positive constant independent of $T$. Substituting it into \eqref{di34lizichfd}, we have that
\begin{align*}
    \abs{ \norm{g}_{\FH}^2 -\norm{g}_{\FH_1}^2}\le C  \int_{[s,t]^2}  (uv)^{H-1}\dif u\dif v \le C(t-s)^{2H}.
\end{align*}
  The triangle inequality together with \eqref{wellknownn} implies 
  \begin{align*}
     \norm{g}_{\FH}^2\le C(t-s)^{2H}.
  \end{align*}
  Hence, \eqref{gima2fang jie} holds for Examples~\ref{exmp0005}-\ref{exmp7-1zili}.\\
  (iv) The inner product formula \eqref{guanjiandengshi 000-new1} of Example~\ref{exmp lizi001} implies that
  \begin{align*}
  0\le \norm{g}_{\FH_1}^2  - \norm{g}_{\FH}^2& =H \int_{s}^t  u^{2H-1}\dif u \le \frac12\abs{s-t}^{2H}.
  \end{align*} In the same vein, \eqref{gima2fang jie} holds for Example~\ref{exmp lizi001}.
\end{proof}
\begin{corollary} 
{Let $\hat{\theta}_T$,\, $\hat{\mu}_T$ and $\hat{\alpha}_T$ be defined by \eqref{LSE1}, \eqref{LSE2} and \eqref{LSE3}, respectively.}
Then $\hat{\theta}_T$,\, $\hat{\mu}_T$ and $\hat{\alpha}_T$ are strong consistent. Moreover, the following integral is finite:
  \begin{align}\label{Sigmapingfang 2}
    \sigma^2_{\infty}:= 
    \E\left[\left(\int_0^{\infty} e^{-\theta s} G_s \dif s\right)^2\right]<\infty.
  \end{align}
\end{corollary}
\begin{proof}
Proposition~\ref{proop3-1} implies condition $(\mathcal{A}_1)$ of Theorem~\ref{Eseiby theorem} is valid. By Theorem~\ref{Eseiby theorem} (i), we have that $\hat{\theta}_T$,\, $\hat{\mu}_T$ and $\hat{\alpha}_T$ are strong consistent.

Taking $s=0$ in \eqref{gima2fang jie}, we have that there exists a positive constant $C$ independent of $T$ such that
  \begin{align*}
    \E\left[ G_t^2\right]\le Ct^{2H},\quad   t\ge 0.
  \end{align*}
  It follows from the Cauchy-Schwarz inequality and $\E[G_t]\equiv 0$ that the autocovariance function $\mathrm{cov}(G_t,G_s)$ of $G$ satisfies
  \begin{align*}
      \abs{\mathrm{cov}(G_t,G_s)}=\abs{\E[G_tG_s] }\le \Big(\E\left[ G_t^2\right]\E\left[ {G_s^2}\right]\Big)^{\frac12}\le  C(ts)^{H}.
  \end{align*}
Therefore, the Fubini's theorem implies that
  \begin{align*}
    \E\left[\left(\int_0^{\infty} e^{-\theta s} G_s \dif s\right)^2\right]&= \int_0^{\infty} \dif s \int_0^{\infty} \E[G_tG_s]\dif t \le C  \int_0^{\infty} \dif s \int_0^{\infty} \dif t\,e^{-\theta (s+t)} (ts)^{H}<\infty.
  \end{align*}
\end{proof}
Denote
\begin{align}\label{zeta T jifen}
  \zeta_t = \int_0^t e^{-\theta(t-s)} \dif G_s,\qquad    \eta_t = \int_0^t e^{-\theta(t-u)}\dif B^H_u,
\end{align}
where $\theta>0$ is a constant {and  $G$ is taken from Examples~\ref{exmp 000001}-\ref{exmp lizi001}}. Then $\zeta_t,\,\eta_t$ are respectively the ergodic OU process defined by the solutions to the stochastic differential equations 
\begin{align}
  \dif \zeta_t & =-\theta \zeta_t \dif t +\dif  G_t, \quad \zeta_0 = 0,\label{OU dingyi}       \\
  \dif \eta_t  & =-\theta \eta_t \dif t +\dif  B^H_t, \quad \eta_0 = 0.\label{OU dingyi duibi}
\end{align}
Propositions \ref{proop 3-5}-\ref{assum5} are concerning to the asymptotic growth of the covariance functions $\E(G_t\zeta_t),\, \E(G_s\zeta_t)$ and the variance function $\E(\zeta_t^2)$ as $t\to \infty$.

\begin{proposition}\label{proop 3-5}
  Let $\beta$ and $\zeta_t$ be defined by \eqref{beta biaoshishizi} and \eqref{zeta T jifen}, respectively. Then we have
  \begin{align}\label{assum-chy-52-001}
    \lim_{t \to \infty}  t^{2\beta}\E \left( \zeta_t ^2\right) & =\sigma_G^2, 
  \end{align}
  where
  \begin{equation}\label{changshu sg2}
    \sigma_G^2=\left\{
    \begin{array}{ll}
      \frac{H\abs{2H-1}2^{2H-2}}{\theta^2}, & \text{for Example~\ref{exmp 000001}},                  \\
      \frac{ 2^K K(1-K)(H')^2}{\theta^2} ,  & \text{for Example~\ref{exmp 000001-04}},               \\
      \frac{H}{2\theta} ,                   & \text{for Example~\ref{exmp 000001-06}},               \\
      \frac{H\Gamma(1-2H)}{\theta} ,        & \text{for Example~\ref{exmp 000001-06-buch}},          \\
      \theta^{-2H}H\Gamma(2H),              & \text{for Examples~\ref{exmp0005}-\ref{exmp lizi001}}.
    \end{array}
    \right.
  \end{equation}
\end{proposition}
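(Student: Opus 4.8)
Since $\zeta_t=\int_0^t e^{-\theta(t-s)}\dif G_s=G(g_t)$ with the absolutely continuous function $g_t(s):=e^{-\theta(t-s)}\mathbbm{1}_{[0,t]}(s)\in\mathcal{V}_{[0,t]}$, It\^o's isometry \eqref{G extension defn} gives $\E(\zeta_t^2)=\langle g_t,g_t\rangle_{\FH}$. The plan is to insert the inner product formulas of Proposition~\ref{prop 2-4 fenxilizi} according to the four types (T1)--(T4), thereby reducing the claim to the asymptotic evaluation of explicit iterated integrals. In every case I localize the mass near the corner $u=v=t$ through the substitution $u=t-x,\ v=t-y$, which turns the exponential factor $e^{-\theta(t-u)-\theta(t-v)}$ into $e^{-\theta(x+y)}$ and makes the large-$t$ limit amenable to dominated convergence.

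\textbf{Types (T1) and (T2).} For Examples~\ref{exmp 000001}--\ref{exmp 000001-04} the formula \eqref{diyineijigongshi 00} together with \eqref{ziziexmp11111}--\eqref{liziexmp2222} writes $\E(\zeta_t^2)$ as a double integral whose kernel, evaluated at $u=v=t$, is asymptotically $H\abs{2H-1}(2t)^{2H-2}$ resp.\ a constant multiple of $t^{2H-2}$. After the substitution above, multiplying by $t^{2\beta}=t^{2-2H}$ and letting $t\to\infty$, dominated convergence produces $\int_0^\infty\!\!\int_0^\infty e^{-\theta(x+y)}\dif x\dif y=\theta^{-2}$ times the limiting kernel constant, which reproduces $\sigma_G^2$ in \eqref{changshu sg2}. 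For Examples~\ref{exmp 000001-06}--\ref{exmp 000001-06-buch} the formulas \eqref{guanjiandengshi 000-new1-000}--\eqref{guanjiandengshi 000-new1-000-buch} split $\E(\zeta_t^2)$ into a double integral of the previous type, which is $O(t^{2H-2})$ and hence negligible after multiplication by $t^{2\beta}=t^{1-2H}$, plus a diagonal term proportional to $\int_0^t e^{-2\theta(t-u)}u^{2H-1}\dif u$; the substitution $u=t-x$ and Lemma~\ref{upper bound F} show the latter is asymptotic to $t^{2H-1}/(2\theta)$, giving exactly $H/(2\theta)$ resp.\ $H\Gamma(1-2H)/\theta$.

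\textbf{Types (T3) and (T4).} Here $\beta=0$ and I use the decomposition $\E(\zeta_t^2)=\E(\eta_t^2)+D_t$ with $D_t:=\langle g_t,g_t\rangle_{\FH}-\langle g_t,g_t\rangle_{\FH_1}$ and $\eta_t$ as in \eqref{zeta T jifen}. By \eqref{dierneijigongshi 00} and the bound \eqref{uppppper} (type (T3)), or by the explicit identity \eqref{guanjiandengshi 000-new1} (type (T4)), one has $\abs{D_t}\le C\int_0^t\!\!\int_0^t e^{-\theta(t-u)-\theta(t-v)}(uv)^{H-1}\dif u\dif v=O(t^{2H-2})$ resp.\ $D_t=-H\int_0^t e^{-2\theta(t-u)}u^{2H-1}\dif u=O(t^{2H-1})$, both tending to $0$ since $H<1$. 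It then remains to prove $\lim_{t\to\infty}\E(\eta_t^2)=H\Gamma(2H)\theta^{-2H}$, the stationary variance of the fractional OU process. For $H>\tfrac12$ this follows from the standard fBm inner product formula $\E(\eta_t^2)=H(2H-1)\int_0^t\!\!\int_0^t e^{-\theta(t-u)-\theta(t-v)}\abs{u-v}^{2H-2}\dif u\dif v$: the substitution $u=t-x,\ v=t-y$ and dominated convergence give $H(2H-1)\int_0^\infty\!\!\int_0^\infty e^{-\theta(x+y)}\abs{x-y}^{2H-2}\dif x\dif y$, which after symmetrizing and substituting $z=\abs{x-y}$ equals $H(2H-1)\theta^{-2H}\Gamma(2H-1)=H\Gamma(2H)\theta^{-2H}$. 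For the range $H<\tfrac12$ the convenient singular-kernel representation is unavailable, so I would instead invoke the known value of the limiting variance of the fractional Ornstein--Uhlenbeck process, valid for every $H\in(0,1)$ (for instance via its spectral density proportional to $\abs{\lambda}^{1-2H}/(\theta^2+\lambda^2)$).

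\textbf{Main obstacle.} The delicate step throughout is the dominated-convergence justification under the singular kernels $(u+v)^{2H-2}$, $(uv)^{H-1}$ and $\abs{u-v}^{2H-2}$, whose exponents lie in $(-2,-1)$ when $H<\tfrac12$. One must split the integration domain and verify that the region away from the corner $u=v=t$ contributes only exponentially small mass (controlled by the prefactor $e^{-2\theta t}$), so that in the limit only the value of the kernel at $u=v=t$ survives. The second subtlety is the evaluation of $\lim_{t\to\infty}\E(\eta_t^2)$ for $H<\tfrac12$, where the kernel computation used for $H>\tfrac12$ breaks down and one must appeal to the spectral formula for the stationary fractional OU variance.
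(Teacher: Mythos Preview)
Your proposal is correct and follows essentially the same architecture as the paper: you use the same inner product formulas from Proposition~\ref{prop 2-4 fenxilizi} according to the type classification (T1)--(T4), and for types (T3)--(T4) you use the same decomposition $\E(\zeta_t^2)=\E(\eta_t^2)+D_t$ and show $D_t\to 0$ via the bound \eqref{uppppper} and Lemma~\ref{upper bound F}, exactly as the paper does.

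The only methodological difference is in how the limits are extracted. You rely on the substitution $u=t-x,\ v=t-y$ followed by two-dimensional dominated convergence, and you correctly flag that the domination must be justified by splitting off the exponentially small region near the origin $u=v=0$. The paper instead applies L'H\^opital's rule to the iterated integral (writing $\norm{h}_{\FH}^2$ as $e^{-2t}$ times a growing integral and differentiating numerator and denominator), which reduces the problem to a one-dimensional integral of the form $\int_0^t e^{-x}(2-x/t)^{2H-2}\dif x$ where dominated convergence is immediate. This sidesteps precisely the obstacle you identify. For the fBm limit $\lim_{t\to\infty}\E(\eta_t^2)=\theta^{-2H}H\Gamma(2H)$, the paper simply cites the literature (reference \cite{hnz 19}, equation \eqref{hnz19-1}) for all $H\in(0,1)$, whereas you re-derive the $H>\tfrac12$ case and appeal to the spectral formula for $H<\tfrac12$; both are acceptable, the paper's route being shorter.
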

\begin{proof}
  We assume that $\theta=1$ without loss of generality. Denote the function 
 \begin{equation}\label{func.h}
      h(\cdot)= e^{-(t-\cdot)} \mathbbm{1}_{[0,t]}(\cdot).
  \end{equation} 
{Then the random variable $\zeta_t$ defined by \eqref{zeta T jifen} can be rewritten as:}
  \begin{equation}\label{solution OU}
       \zeta_t=\int_0^t h(s) \dif G_s.
  \end{equation}
 By It\^{o}'s isometry, we have
  \begin{align}
      \E \left( \zeta_t ^2\right) =\norm{h}_{\FH}^2, \qquad \E \left( \eta_t ^2\right) =\norm{h}_{\FH_1}^2,
  \end{align}where $\eta_t=\int_0^t h(s) \dif B^H_s$ is the solution to \eqref{OU dingyi duibi}.\\
  (i) 
  The inner product \eqref{diyineijigongshi 00} of Examples~\ref{exmp 000001}-\ref{exmp 000001-04} implies that
\begin{equation}\label{hmo2}
    \norm{h}_{\FH}^2={\int_{[0,t]^2} e^{u+v-2t} \frac {\partial^2 R(u,v)}{\partial u\partial v} \dif u\dif v}. 
\end{equation}
Substituting \eqref{ziziexmp11111} into \eqref{hmo2} and by symmetry, we have for Example~\ref{exmp 000001},
\begin{equation*}
    \norm{h}_{\FH}^2=2H\abs{2H-1}e^{-2t} \int_0^t e^u\dif u\int_0^u e^v (u+v)^{2H-2}\dif v.
\end{equation*} 
Therefore L'H\^{o}pital's rule implies that for Example~\ref{exmp 000001},
  \begin{align}
    \lim_{t\to\infty} t^{2-2H}\E \left( \zeta_t ^2\right) \notag& =\lim_{t\to\infty}\frac{2H\abs{2H-1}}{e^{2t} t^{2H-2}}{\int_0^t e^u\dif u\int_0^u e^v (u+v)^{2H-2}\dif v}\notag \\
    & =\lim_{t\to\infty}H\abs{2H-1} \int_{0}^t e^{v-t} (1+\frac{v}{t})^{2H-2} \dif v\notag\\
    &=\lim_{t\to\infty}H\abs{2H-1} \int_{0}^t e^{-x} (2-\frac{x}{t})^{2H-2} \dif x,\quad (\text{ by } x=t-v)\notag\\
    &=H\abs{2H-1}2^{2H-2},\label{zizi111jixian}
  \end{align}where the last line is from Lebesgue's dominate theorem. \\
 Similarly, substituting \eqref{liziexmp2222} into \eqref{hmo2} and by symmetry, we have for Example~\ref{exmp 000001-04},
 \begin{equation*}
        \norm{h}_{\FH}^2=2K(1-K)(2H')^2 e^{-2t} \int_0^t e^u u^{{2H'-1}}\dif u\int_0^u e^v  (u^{2H'}+v^{2H'})^{K-2} v^{{2H'-1}}\dif v.
 \end{equation*}
Therefore L'H\^{o}pital's rule and the integration by parts imply that for Example~\ref{exmp 000001-04}, 
  \begin{align*}
     & \lim_{t\to\infty}{t^{2-2H}} \E \left( \zeta_t ^2\right)  \\
     & =\lim_{t\to\infty}\frac{2K(1-K)(2H')^2}{e^{2t} t^{2H-2}}\int_0^t e^u u^{{2H'-1}}\dif u\int_0^u e^v  (u^{2H'}+v^{2H'})^{K-2} v^{{2H'-1}}\dif v  \\
     & =\lim_{t\to\infty}\frac{K(1-K)(2H')^2}{e^t t^{2H'(K-1)-1}}  \int_{0}^t e^{v} (t^{2H'}+v^{2H'})^{K-2} v^{2H'-1}\dif v                                                      \\
     & =\lim_{t\to\infty}\frac{K(1-K)(2H')^2}{e^t t^{2H'(K-1)-1}} \left[t^{2H'(K-1)-1}2^{K-2}e^t  +(K-2)t^{2H'-1}\int_{0}^t e^{v}(t^{2H'}+v^{2H'})^{K-3} v^{2H'-1} \dif v\right] \\
     & = 2^K K(1-K) (H')^2,
  \end{align*}
  where in the last line, we {use Lemma~\ref{upper bound F} and the following estimate}:
  \begin{align*}
    \frac{1}{t^{2H'(K-2)}}\int_{0}^t e^{u-t}(t^{2H'}+u^{2H'})^{K-3} u^{2H'-1} \dif u  \le \frac{1}{t^{2H'K}}\int_{0}^t e^{u-t} u^{2H'K-1} \dif u\le  \frac{C }{t }.
  \end{align*}
  (ii) 
  The inner product formula \eqref{guanjiandengshi 000-new1-000} of Example~\ref{exmp 000001-06} and the symmetry imply that 
  \begin{equation}\label{zijiexmp 1.3}
       \norm{h}_{\FH}^2=2H(2H-1) e^{-2t}\int_{0}^t e^{u}\dif u\int_0^u e^v  (u+v)^{2H-2}\dif u\dif v + H\int_{0} ^t e^{2u-2t}u^{2H-1}\dif u.
  \end{equation} The limit \eqref{zizi111jixian} implies that 
  \begin{align*}
      \lim_{t\to\infty } t^{1-2H} e^{-2t}\int_{0}^t e^{u}\dif u\int_0^u e^v  (u+v)^{2H-2}\dif u\dif v=0.
  \end{align*} It is clear 
  \begin{align}\label{lcear jixian}
     \lim_{t\to\infty } t^{1-2H}e^{-2t} \int_{0} ^t e^{2u}u^{2H-1}\dif u= \frac12.
  \end{align} Substituting the above two limits into \eqref{zijiexmp 1.3}, we have that for Example~\ref{exmp 000001-06}, 
  \begin{align*}
   \lim_{t\to\infty }  t^{1-2H}\E \left( \zeta_t ^2\right) = \frac{H}{2}.
  \end{align*}
The inner product formula \eqref{guanjiandengshi 000-new1-000} of Example~\ref{exmp 000001-06-buch} implies that 
  \begin{equation*}
     \norm{h}_{\FH}^2=2H\Gamma(1-2H)  \int_{0} ^t e^{2u-2t}u^{2H-1}\dif u.  
  \end{equation*}
By \eqref{lcear jixian}, we have for Example~\ref{exmp 000001-06-buch},
\begin{align*}
   \lim_{t\to\infty }  t^{1-2H}\E \left( \zeta_t ^2\right) =  {H}\Gamma(1-2H).
  \end{align*}
  (iii) 
  The inner product formula \eqref{dierneijigongshi 00} of Examples~\ref{exmp0005}-\ref{exmp7-1zili} implies that
  \begin{equation}
     \norm{h}_{\FH}^2-\norm{h}_{\FH_1}^2=\int_{[0,t]^2} e^{u-t+v-t} \frac{\partial }{\partial u}\left( \frac{\partial R(u,v)}{\partial v} -  \frac{\partial R^B(u,v)}{\partial v} \right) \dif u   \dif v. 
  \end{equation}
Using the inequalities \eqref{phi2}-\eqref{uppppper}, we have as $t\to\infty$
  \begin{align*}
   \abs{ \norm{h}_{\FH}^2-\norm{h}_{\FH_1}^2}& ={\int_{[0,t]^2} e^{u-t+v-t} \abs{\frac{\partial }{\partial u}\left( \frac{\partial R(u,v)}{\partial v} -  \frac{\partial R^B(u,v)}{\partial v} \right) }\dif u   \dif v} \\
   & \le  C\left(\int_0^t e^{u-t} u^{H-1}\dif u\right)^2\to 0,
  \end{align*} where in the last line we use Lemma~\ref{upper bound F}.
Furthermore, it is well-known (see \cite{hnz 19}) that
  \begin{align} \label{hnz19-1}
  \lim_{t\to\infty}\norm{h}_{\FH_1}^2
  = H\Gamma(2H).
  \end{align}
By the triangle inequality, we have for Examples~\ref{exmp0005}-\ref{exmp7-1zili}, 
  \begin{align*} \lim_{t\to\infty}\E[  \zeta_t^2] =\lim_{t\to\infty}\norm{h}_{\FH}^2= H\Gamma(2H).
  \end{align*}
  (iv) 
  The inner product formula \eqref{guanjiandengshi 000-new1} of Example~\ref{exmp lizi001} implies that
\begin{equation*}
     \norm{h}_{\FH}^2-\norm{h}_{\FH_1}^2= -H{\int_{0}^t e^{2(u-t) } u^{2H-1} \dif u   }.
\end{equation*}Using Lemma~\ref{upper bound F}, we have 
\begin{align*}
   \lim_{t\to\infty} [ \norm{h}_{\FH}^2-\norm{h}_{\FH_1}^2]=0
\end{align*} since $H\in (0,\frac12)$.
Combining it with \eqref{hnz19-1}, we have 
for Example~\ref{exmp lizi001},  \begin{align*} \lim_{t\to\infty}\E[  \zeta_t^2] = H\Gamma(2H).
  \end{align*}
\end{proof}


\begin{proposition}\label{assum5-1}
{Let the Gaussian process $G$ be taken from Examples~\ref{exmp 000001}-\ref{exmp lizi001}.
Then} the covariance function $\E[G_sG_t]$ satisfies for any fixed $s>0$,
  \begin{align}
    \lim_{t \to \infty}\frac{1}{t^{H}} \E[G_sG_t] & =0. \label{assum51}
  \end{align}
\end{proposition}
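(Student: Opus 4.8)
The goal is the statement $R(s,t):=\E[G_sG_t]=o(t^{H})$ as $t\to\infty$ for each fixed $s>0$, which is precisely \eqref{assum51}. Since $G_0=0$ forces $R(s,0)=0$ and Hypothesis~$(H_1)$ makes $R(s,\cdot)$ absolutely continuous, I would start from
\[
R(s,t)=\int_0^t \frac{\partial R(s,v)}{\partial v}\,\dif v .
\]
The whole proof then rests on a decay estimate for the integrand: for each fixed $s>0$ there should exist a constant $C_s$ and an exponent $\gamma<H-1$ (depending on the example) with
\[
\Big|\frac{\partial R(s,v)}{\partial v}\Big|\le C_s\,v^{\gamma}\qquad (v\ge 2s)
\]
for the types (T1)--(T2), while for (T3)--(T4) the same bound holds with $R$ replaced by the difference $R-R^{B}$. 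Granting this, I would split $\int_0^t=\int_0^{2s}+\int_{2s}^t$: the first integral equals the constant $R(s,2s)$, and the second is at most $C_s\int_{2s}^t v^{\gamma}\dif v=O(t^{\gamma+1})$ (or $O(\log t)$ when $\gamma=-1$). As $\gamma+1<H$, both are $o(t^{H})$; for (T3)--(T4) one adds the elementary estimate $R^{B}(s,t)=\tfrac12 s^{2H}+O(t^{2H-1})=o(t^{H})$. Dividing by $t^{H}$ gives \eqref{assum51}.

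For the types (T1) and (T3) I would obtain the decay estimate from the pointwise bounds \eqref{phi2-new} and \eqref{phi2}. Because $R(0,v)\equiv R^{B}(0,v)\equiv 0$, hypotheses $(H_2')$ and $(H_3')$ give
\[
\frac{\partial R(s,v)}{\partial v}=\int_0^s \frac{\partial^2 R(u,v)}{\partial u\,\partial v}\,\dif u,
\]
and the analogous identity for the difference. Keeping the true large-$v$ sizes $(v+u)^{2H-2}\le v^{2H-2}$ and $(u^{2H'}+v^{2H'})^{K-2}(uv)^{2H'-1}\le u^{2H'-1}v^{2H-2H'-1}$ and integrating over $u\in[0,s]$ produces a bound $C_s\big(v^{2H-2}+v^{2H-2H'-1}\big)$, so one may take $\gamma=\max(2H-2,\,2H-2H'-1)$.

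For the piecewise types (T2) and (T4) I would instead use the explicit derivative formulas from Step~1 of the proof of Proposition~\ref{prop 2-4 fenxilizi}. For Example~\ref{exmp 000001-06-buch}, $\partial_v R(s,v)=0$ once $v>s$ by \eqref{exmp 000001-06-piandao-new}, so $R(s,t)$ is eventually constant in $t$; likewise for Example~\ref{exmp lizi001} the difference \eqref{guanjiandengshi 000-new1000} vanishes for $v>s$, so $R(s,\cdot)-R^{B}(s,\cdot)$ is eventually constant. For Example~\ref{exmp 000001-06} the derivative reduces for $v>s$ to $H\big[(v+s)^{2H-1}-v^{2H-1}\big]$, which the mean value theorem bounds by $C_s v^{2H-2}$, giving $\gamma=2H-2$. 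Since $H<1$ gives $2H-2<H-1$ and $K<2$ gives $2H-2H'-1=2H'(K-1)-1<H-1$, the exponent $\gamma$ stays strictly below $H-1$ in every case, which finishes the proof.

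The main obstacle is getting the exponent sharp. The uniform bound \eqref{uppppper}, namely $(uv)^{H-1}$, is too weak for this proposition: it integrates to exactly $t^{H}$ and yields only $R(s,t)=O(t^{H})$, not $o(t^{H})$. One must retain the genuine large-$v$ decay of each summand in \eqref{phi2-new}--\eqref{phi2}, which is strictly faster than $(uv)^{H-1}$ once $v$ is large while $u$ stays bounded by $s$, so as to extract an exponent strictly below $H-1$. The decisive inequality is that $K<2$ forces $2H-2H'-1<H-1$, which is exactly what turns the cancellation of the leading $t^{2H}$ term in each covariance into a quantitative $o(t^{H})$ bound.
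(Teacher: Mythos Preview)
Your argument is correct and, at bottom, the same as the paper's. The paper writes $\E[G_sG_t]=\E[G_s^2]+\langle \mathbf{1}_{[s,t]},\mathbf{1}_{[0,s]}\rangle_{\FH}$ and then evaluates the inner product via Proposition~\ref{prop 2-4 fenxilizi}; since $\langle \mathbf{1}_{[s,t]},\mathbf{1}_{[0,s]}\rangle_{\FH}=\int_s^t\int_0^s \partial_u\partial_v R\,\dif u\,\dif v=\int_s^t \partial_v R(s,v)\,\dif v$ (and the analogous identity for $R-R^B$), this is exactly your split $R(s,t)=\text{const}+\int_{\cdot}^t \partial_v R(s,v)\,\dif v$. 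The only stylistic difference is that the paper integrates first and then Taylor--expands the resulting closed-form expressions, whereas you bound $\partial_v R(s,v)$ pointwise (extracting the exponent $\gamma=\max(2H-2,\,2H-2H'-1)<H-1$) and integrate afterwards. Your remark that the crude bound \eqref{uppppper} only gives $O(t^H)$, so one must keep the true large-$v$ decay of each summand in \eqref{phi2-new}--\eqref{phi2}, is exactly the point that also drives the paper's explicit computation.
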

\begin{proof} Since $s>0$ is fixed and $t\to\infty$, we can take $t>2s$. Denote
  \begin{align} \label{fof jihao}
    f_0(\cdot) =  \mathbf{1}_{[0,s]} (\cdot),\quad f(\cdot) =  \mathbf{1}_{[0,t]} (\cdot),\quad g(\cdot) =  \mathbf{1}_{[s,t]} (\cdot).
  \end{align} It\^{o}'s isometry implies that 
  \begin{align*}
        \E[G_sG_t]=\E[G_sG_s]+\E[G_s(G_t-G_s)] =\langle f_0,\, f_0\rangle_{\FH}+\langle g,\, f_0\rangle_{\FH}.
  \end{align*}
  Thus, we only need to show that the following limit holds:
  \begin{align}
    \lim_{t\to \infty}\frac{1}{t^H}\langle g,\, f_0\rangle_{\FH}=0.\label{prop 3.3 mubiao}
  \end{align}
  (i) The inner product \eqref{diyineijigongshi 00} of Examples~\ref{exmp 000001}-\ref{exmp 000001-04} implies that
\begin{align}\label{zhzhzhzhjjjj}
    \langle g,\, f_0\rangle_{\FH}=\int_{s}^t\dif u \int_0^s {  \frac {\partial^2 R(u,v)} {\partial u\partial v} }\dif v.
\end{align}
Substituting the inequality \eqref{phi2-new} 
into \eqref{zhzhzhzhjjjj} yields 
\begin{align}
    \abs{ \langle g,\, f_0\rangle_{\FH}}&\le  {C} \int_0^s\dif u \int_s^t   (u+v)^{2H-2}+  (u^{2H'}+v^{2H'})^{K-2}(uv)^{2H'-1} \dif v\notag\\
    &=C\Big[ t^{2H}\left(\left(1+\frac{s}{t}\right)^{2H}-1\right)-(2s)^{2H}+s^{2H}\notag                                                                                             \\
          & + t^{2H}\left(\left(1+\left(\frac{s}{t}\right)^{2H'}\right)^{K}-1\right)-2^K s^{2H}+s^{2H}\Big] 
\end{align}
 Since $H'>0,\, K\in (0,2)$ and $H:=H'K\in (0, 1)$ and $\frac{s}{t}\in (0,\frac12)$, Taylor's formula implies that
  \begin{align*}
\frac{1}{t^{H} }\abs{ \langle g,\, f_0\rangle_{\FH}}& \le {C\left\{t^{H-1}\left[2Hs+ O\left(\frac{s}{t}\right)\right]+t^{H}\left[K\left(\frac{s}{t}\right)^{2H'}+ O\left(\left(\frac{s}{t}\right)^{4H'}\right)\right]+   t^{-H}\right\}}.
  \end{align*} Taking $t\to\infty$ yields \eqref{prop 3.3 mubiao} for Examples~\ref{exmp 000001}-\ref{exmp 000001-04}. 
 \\
  (ii) Since the intersection of the supports of $f_0$ and $g$ is of Lebesgue measure zero, the inner product formula \eqref{guanjiandengshi 000-new1-000} of Example~\ref{exmp 000001-06} implies that
  \begin{align*}
    \langle g,\, f_0\rangle_{\FH}=H(2H-1)\int_0^s\dif u \int_{s}^t (u+v)^{2H-2}  \dif v,
  \end{align*} which implies \eqref{prop 3.3 mubiao} for Example~\ref{exmp 000001-06} in the same way as (i).  
  Similarly, the inner product formula \eqref{guanjiandengshi 000-new1-000-buch} of Example~\ref{exmp 000001-06-buch} implies that 
   \begin{align*}
    \langle g,\, f_0\rangle_{\FH}\equiv 0
  \end{align*} which implies \eqref{prop 3.3 mubiao} for Example~\ref{exmp 000001-06-buch}. \\
  (iii) 
  The inner product formula \eqref{dierneijigongshi 00} of Examples~\ref{exmp0005}-\ref{exmp7-1zili}  implies that
  \begin{align}\label{zhzhjjjj004}
     \langle g,\, f_0\rangle_{\FH}- \langle g,\, f_0\rangle_{\FH_1}= \int_0^s\dif u \int_s^t { \frac{\partial^2 R(u,v)}{\partial u\partial v}-\frac{\partial^2 R^B(u,v)}{\partial u\partial v}} \dif v.
  \end{align}  
  Substituting the inequality \eqref{phi2} into \eqref{zhzhjjjj004} yields 
  \begin{align*}
    \abs{\langle g,\, f_0\rangle_{\FH}- \langle g,\, f_0\rangle_{\FH_1}} &\le C\int_0^s\dif u \int_s^t   (u+v)^{2H-2}+  (u^{2H'}+v^{2H'})^{K-2}(uv)^{2H'-1} \dif v.
  \end{align*}
In the same vein as (i), we have \begin{align}\label{jiejji 10000000}
    \lim_{t\to\infty} \frac{1}{t^{H}}\abs{\langle g,\, f_0\rangle_{\FH}- \langle g,\, f_0\rangle_{\FH_1}} =0.
\end{align}
  For the fBm $(B^{H})$ and its covariance function $R^B(s,t)=\E[B^H_sB^H_t]$, It\^{o}'s isometry \eqref{G extension defn} implies that  as $t\to \infty$,
  \begin{align} \label{taylar 10000000}
    \frac{1}{t^H}    \langle g,\, f_0\rangle_{\FH_1}=
    \frac{1}{t^H}   \E\big[B^H(s)\big(B^H(t)-B^H(s)\big)\big]=\frac{1}{2t^H}\big[t^{2H}-(t-s)^{2H}-s^{2H}\big]\to 0.
  \end{align}
The triangle inequality together with \eqref{jiejji 10000000} and \eqref{taylar 10000000} implies \eqref{prop 3.3 mubiao} for Examples~\ref{exmp0005}-\ref{exmp7-1zili}.\\
  (iv)  Since the intersection of the supports of $f_0$ and $g$ is of Lebesgue measure zero, the inner product formula \eqref{innp fg3-zhicheng0-00} of Example~\ref{exmp lizi001} implies that
  \begin{align*}
    \frac{1}{t^H}\abs{\langle g,\, f_0\rangle_{\FH}}=\frac{1}{t^H}    \langle g,\, f_0\rangle_{\FH_1}\to 0,
  \end{align*}where the last limit is by \eqref{taylar 10000000}. Hence  \eqref{prop 3.3 mubiao} holds for Example~\ref{exmp lizi001}.
\end{proof}

\begin{proposition}\label{assum5}
{Let the Gaussian process $G$ be taken from Examples~\ref{exmp 000001}-\ref{exmp lizi001}
and} let $\zeta_t$ be difined by \eqref{zeta T jifen}. We have that  for any fixed $s\ge 0$,
  \begin{align}
    \lim_{t \to \infty} \frac{1}{t^{H}}\E \left( G_t   \zeta_t \right) =0, \qquad
    \lim_{t \to \infty}  \E \left( G_s   \zeta_t \right) & =0. \label{assum52-1}
  \end{align}
\end{proposition}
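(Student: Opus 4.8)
The plan is to follow the scheme of Propositions~\ref{proop 3-5} and \ref{assum5-1}. With $h(\cdot)=e^{-\theta(t-\cdot)}\mathbbm{1}_{[0,t]}(\cdot)$ as in \eqref{func.h} we have $\zeta_t=G(h)$, $G_t=G(\mathbbm{1}_{[0,t]})$ and $G_s=G(\mathbbm{1}_{[0,s]})$, so It\^o's isometry turns the two assertions into statements about inner products, $\E(G_t\zeta_t)=\langle\mathbbm{1}_{[0,t]},\,h\rangle_{\FH}$ and $\E(G_s\zeta_t)=\langle\mathbbm{1}_{[0,s]},\,h\rangle_{\FH}$. I would then split Examples~\ref{exmp 000001}--\ref{exmp lizi001} into the four types (T1)--(T4) and substitute the matching inner product formula of Proposition~\ref{prop 2-4 fenxilizi}, exactly as was done for $\norm{h}_{\FH}^2$ and $\langle g,\,f_0\rangle_{\FH}$ above.

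For the types (T1) and (T2), i.e. Examples~\ref{exmp 000001}--\ref{exmp 000001-06-buch}, the formulas \eqref{diyineijigongshi 00}, \eqref{guanjiandengshi 000-new1-000} and \eqref{guanjiandengshi 000-new1-000-buch} express each covariance as a sum of terms of the form $\int v^{\gamma}\dif v\cdot\int e^{-\theta(t-u)}u^{\gamma}\dif u$, in which the exponentially weighted factor is precisely the function $A(\cdot)$ of Lemma~\ref{upper bound F} with $\beta=H$ or $\beta=2H$. Bounding the mixed derivatives by \eqref{phi2-new} and \eqref{uppppper} and invoking $A(t)\le C(1\wedge t^{\beta-1})$ gives $\E(G_t\zeta_t)=O(t^{2H-1})$, so that $t^{-H}\E(G_t\zeta_t)=O(t^{H-1})\to0$; for the second limit the same computation yields a bound $\le C s^{H}A(t)$ for the regular terms and a factor $e^{-\theta(t-s)}$ for the diagonal/endpoint terms, both of which vanish as $t\to\infty$ with $s$ fixed. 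The one delicate point is the kernel $(u+v)^{2H-2}$ with $H\in(0,\tfrac12)$: there one must integrate over the variable restricted to $[0,s]$ \emph{first}, producing the integrable majorant $\tfrac{v^{2H-1}-(s+v)^{2H-1}}{1-2H}\le\tfrac{v^{2H-1}}{1-2H}$, before Lemma~\ref{upper bound F} can be applied.

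For the types (T3) and (T4), i.e. Examples~\ref{exmp0005}--\ref{exmp lizi001}, I would decompose each covariance into its fractional-Brownian baseline and a remainder,
\[
\E(G_t\zeta_t)=\big(\langle\mathbbm{1}_{[0,t]},\,h\rangle_{\FH}-\langle\mathbbm{1}_{[0,t]},\,h\rangle_{\FH_1}\big)+\E(B^H_t\eta_t),
\]
and similarly for $\E(G_s\zeta_t)$ with $\mathbbm{1}_{[0,t]}$ replaced by $\mathbbm{1}_{[0,s]}$. The remainders in parentheses are controlled exactly as in (T1)--(T2): the formulas \eqref{dierneijigongshi 00} and \eqref{guanjiandengshi 000-new1} together with \eqref{phi2} and \eqref{uppppper} make them $O(t^{2H-1})$ for the first limit and $O(s^{H}A(t))$ (or exponentially small) for the second, hence negligible after normalisation. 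The baseline terms $\E(B^H_t\eta_t)$ and $\E(B^H_s\eta_t)$ are, however, precisely the quantities occurring in conditions $(\mathcal{A}_5)$ and $(\mathcal{A}_4)$ of Theorem~\ref{Eseiby theorem} for the driving noise $B^H$; since the fBm is known to satisfy that whole set of conditions with $p=H$, we obtain $t^{-H}\E(B^H_t\eta_t)\to0$ and $\E(B^H_s\eta_t)\to0$, which closes both estimates.

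The main obstacle is exactly these fractional-Brownian baseline terms, because the difference estimates drawn from \eqref{phi2}--\eqref{uppppper} only measure the deviation of $G$ from $B^H$ and carry no information about $B^H$ itself; moreover a crude Cauchy--Schwarz bound $\abs{\E(B^H_t\eta_t)}\le(\E[(B^H_t)^2]\,\E[\eta_t^2])^{1/2}=O(t^H)$, using \eqref{hnz19-1}, only yields boundedness rather than the decay we need. If a self-contained argument is preferred to quoting the fBm case, the required cancellation can be exhibited through the integration-by-parts identity $\eta_t=B^H_t-\theta\int_0^t e^{-\theta(t-u)}B^H_u\dif u$: inserting the covariance $\E(B^H_uB^H_v)=\tfrac12(u^{2H}+v^{2H}-\abs{u-v}^{2H})$ and applying Lemma~\ref{upper bound F} shows that the leading $t^{2H}$ contributions (respectively the $t^{2H-1}$ contributions, for fixed $s$) cancel, leaving $\E(B^H_t\eta_t)=O(t^{2H-1})+O(1)=o(t^H)$ and $\E(B^H_s\eta_t)\to0$.
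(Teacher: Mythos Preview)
Your proposal is correct and follows the paper's own route almost step for step: reduce both limits to inner products via It\^o's isometry, split into the four types (T1)--(T4), control (T1)--(T2) directly from the formulas \eqref{diyineijigongshi 00}--\eqref{guanjiandengshi 000-new1-000-buch} together with \eqref{phi2-new}--\eqref{uppppper} and Lemma~\ref{upper bound F}, and for (T3)--(T4) separate off the fBm baseline $\langle\cdot,\cdot\rangle_{\FH_1}$. The one place where you diverge is the handling of that baseline: the paper computes $\langle h,f\rangle_{\FH_1}$ and $\langle h,f_0\rangle_{\FH_1}$ explicitly via the fBm inner-product formula of \cite{Alazemi2024}, obtaining integrals of $e^{u-t}\big(u^{2H-1}\pm|u-s|^{2H-1}\big)$ which are then dispatched by Lemma~\ref{upper bound F} and L'H\^opital; you instead either quote that $B^H$ already satisfies $(\mathcal{A}_4)$--$(\mathcal{A}_5)$ from \cite{Sebaiy22021} or exhibit the cancellation through $\eta_t=B^H_t-\theta\int_0^t e^{-\theta(t-u)}B^H_u\,\dif u$. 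Both alternatives are sound---the first is shorter, the second and the paper's are self-contained---so the proposal stands.
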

\begin{proof}
  We assume that $\theta=1$ without loss of generality. Let  the functions $f,\,f_0$ be given by \eqref{fof jihao} and $h$ by \eqref{func.h}.
By \eqref{solution OU}, It\^{o}'s isometry \eqref{G extension defn} implies that
\begin{align*}
    \E \left( G_t   \zeta_t \right) =\langle h,\, f\rangle_{\FH},\qquad \E \left( G_s   \zeta_t \right)=\langle h,\, f_0\rangle_{\FH}.
\end{align*} 
Thus, it suffices to show that for any fixed $s\ge 0$,
  \begin{align}\label{jixian limit 00}
    \lim_{t\to \infty}\frac{1}{t^H}\langle h,\, f\rangle_{\FH}=0,\qquad  \lim_{t\to \infty} \langle h,\, f_0\rangle_{\FH}=0.
  \end{align}  
  (i) The inner product \eqref{diyineijigongshi 00} of Examples~\ref{exmp 000001}-\ref{exmp 000001-04} implies that
\begin{equation}
\begin{split}\label{zhzhzhzhjjjj-3}
   \langle h,\, f\rangle_{\FH} & =  {\int_{[0,t]^2} e^{u-t} \frac {\partial^2 R(u,v)}{\partial u\partial v} \dif u\dif v},\\
   \langle h,\, f_0\rangle_{\FH}           & =  {\int_0^t e^{u-t} \dif u \int_0^s \frac {\partial^2 R(u,v)}{\partial u\partial v}  \dif v}. 
   \end{split}
\end{equation}
Using the inequalities \eqref{phi2-new}-\eqref{uppppper} and Lemma~\ref{upper bound F}, 
we have as $t\to \infty$,
  \begin{align*}
    \frac{1}{t^H} \abs{\langle h,\, f\rangle_{\FH} } & \le \frac{ C}{t^H}\int_{[0,t]^2}  e^{u-t}u^{H-1}v^{H-1} \dif u\dif v \to 0, \\
    \abs{\langle h,\, f_0\rangle_{\FH} }           & \le C \int_{0}^t  e^{u-t}u^{H-1}  \dif u\int_0^s v^{H-1}\dif v \to 0.
  \end{align*} 
  Hence, \eqref{jixian limit 00} holds for Examples~\ref{exmp 000001}-\ref{exmp 000001-04}.\\
  (ii) The inner product formula \eqref{guanjiandengshi 000-new1-000} of Example~\ref{exmp 000001-06} implies that 
 \begin{align*}
   \langle h,\, f\rangle_{\FH} & =H(2H-1) {\int_{[0,t]^2} e^{u-t} (u+v)^{2H-2} 
   \dif u\dif v}+ H\int_{0} ^t e^{u-t}u^{2H-1}\dif u,\\
   \langle h,\, f_0\rangle_{\FH}           & =  H(2H-1) \int_{0}^t e^{u-t}\dif u \int_0^s  (u+v)^{2(H-1)}\dif v + H\int_{0} ^s e^{u-t}u^{2H-1}\dif u, 
\end{align*}where the integrals are finite since $H\in (0,\frac12)$.
Using the inequalities \eqref{phi2-new}-\eqref{uppppper} and Lemma~\ref{upper bound F}, we have \eqref{jixian limit 00} holds for Example~\ref{exmp 000001-06}. 
The inner product formula \eqref{guanjiandengshi 000-new1-000} of Example~\ref{exmp 000001-06-buch} implies that 
 \begin{align*}
    \langle h,\, f\rangle_{\FH} & = 2H\Gamma(1-2H)\int_{0} ^t e^{u-t}u^{2H-1}\dif u,\\
   \langle h,\, f_0\rangle_{\FH}           & =  2H\Gamma(1-2H)\int_{0} ^s e^{u-t}u^{2H-1}\dif u. 
\end{align*}
 since $H\in (0,\frac12)$, we have \eqref{jixian limit 00} holds for Example~\ref{exmp 000001-06-buch}.\\
(iii) The inner product formula \eqref{dierneijigongshi 00} of Examples~\ref{exmp0005}-\ref{exmp7-1zili} implies that
 \begin{equation*}
     \begin{split}
         \langle h,\, f\rangle_{\FH}-\langle h,\, f\rangle_{\FH_1}&={\int_{[0,t]^2} e^{u-t} \Big(\frac {\partial^2 R(u,v)}{\partial u\partial v}-\frac {\partial^2 R^B(u,v)}{\partial u\partial v} \Big)\dif u\dif v},\\
        \langle h,\, f_0\rangle_{\FH}-\langle h,\, f_0\rangle_{\FH_1}&={\int_0^t e^{u-t} \dif u \int_0^s \Big(\frac {\partial^2 R(u,v)}{\partial u\partial v}-\frac {\partial^2 R^B(u,v)}{\partial u\partial v}  \Big)\dif v}. 
     \end{split}
 \end{equation*}
Using the inequalities \eqref{phi2}-\eqref{uppppper} and Lemma~\ref{upper bound F}, 
we have as $t\to \infty$,
  \begin{equation}\label{jj-3new}
     \begin{split}
    \frac{1}{t^H} \abs{\langle h,\, f\rangle_{\FH}-\langle h,\, f\rangle_{\FH_1}} & \le \frac{ C}{t^H}\int_{[0,t]^2}  e^{u-t}u^{H-1}v^{H-1} \dif u\dif v \to 0, \\
    \abs{\langle h,\, f_0\rangle_{\FH}-\langle h,\, f_0\rangle_{\FH_1}}           & \le C \int_{0}^t  e^{u-t}u^{H-1}  \dif u\int_0^s v^{H-1}\dif v \to 0.
     \end{split}
 \end{equation}
 On the other hand, the inner product formula of the fBm $(B^H)$ (see \cite{Alazemi2024}) implies that
  \begin{align}
     \langle h,\, f\rangle_{\FH_1} & =   \int_{[0,t]^2}  e^{u-t}  \abs{u-v}^{2H-1}\sgn{(u-v)} \big(\delta_0(v)-\delta_t(v)\big)\dif v\dif u \notag\\& =    \int_0^t e^{  u-t}\Big(u^{2H-1}+(t-u)^{2H-1}\Big)\dif u,\notag\\
    \langle h,\, f_0\rangle_{\FH_1}              & = {H} \int_{[0,t]^2}  e^{u-t}   \abs{u-v}^{2H-1}\sgn{(u-v)}  \big(\delta_0(v)-\delta_s(v)\big)\dif v\dif u     \notag\\ & =H \left(\int_0^s+\int_s^t \right) e^{u-t}  \Big(u^{2H-1}-\abs{u-s}^{2H-1}\sgn{(u-s)}\Big) \dif u. \label{di 2222 bufen}
  \end{align}
Hence, Lemma~\ref{upper bound F} implies that  for any fixed $s>0$,
  \begin{align}\label{di333222}
       \lim_{t\to \infty}\frac{1}{t^H}\langle h,\, f\rangle_{\FH_1}=0.
  \end{align}It is clear that for any fixed $s>0$,
  \begin{align}
     \lim_{t\to\infty}  \int_0^s  e^{u-t}  \Big(u^{2H-1}-\abs{u-s}^{2H-1}\sgn{(u-s)}\Big) \dif u=0.\label{dji 1 bufen jix}
  \end{align} Since $t\to\infty $ and $s>0$ is fixed, we can take $t\ge 2s$, i.e., $\frac{s}{t}\in [0,\frac12]$. The elementary inequality 
  \begin{equation*}
      \abs{(1-x)^{2H-1}-1}\le C_H x,\qquad x\in [0,\frac12]
  \end{equation*}where $H\in (0,1)$ and $C_H$ is a positive constant, implies that for any fixed $s>0$,
  \begin{align*}
      \lim_{t\to\infty}[(t-s)^{2H-1}-t^{2H-1}]=0.
  \end{align*}By L'H\^{o}pital's rule, we have for any fixed $s>0$,
   \begin{align}
     &\lim_{t\to\infty}  \int_s^t  e^{u-t}  \Big(u^{2H-1}-\abs{u-s}^{2H-1}\sgn{(u-s)}\Big) \dif u\notag\\
     &=\lim_{t\to\infty} \frac{1}{e^t} \int_s^t  e^{u}  \Big(u^{2H-1}-(u-s)^{2H-1}\Big) \dif u\notag\\
     &=\lim_{t\to\infty}[(t-s)^{2H-1}-t^{2H-1}]=0. \label{djierbufen jix}
  \end{align}Combining \eqref{di 2222 bufen},\eqref{dji 1 bufen jix} and \eqref{djierbufen jix}, we have  for any fixed $s>0$,
  \begin{align}\label{di333}
       \lim_{t\to \infty} \langle h,\, f_0\rangle_{\FH_1}=0.
  \end{align} Combining \eqref{jj-3new}, \eqref{di333222} and \eqref{di333},
and using the triangle inequality, we have \eqref{jixian limit 00} holds for Examples~\ref{exmp0005}-\ref{exmp7-1zili}. \\
  (iv) The inner product formula \eqref{guanjiandengshi 000-new1} of Example~\ref{exmp lizi001} implies that
  \begin{equation*}
      \begin{split}
        {\langle h,\, f\rangle_{\FH}-\langle h,\, f\rangle_{\FH_1}} & = -\int_{0}^t  e^{u-t}u^{2H-1} \dif u , \\
    {\langle h,\, f_0\rangle_{\FH}-\langle h,\, f_0\rangle_{\FH_1}}           & =-H \int_{0}^s  e^{u-t}u^{2H-1} \dif u. 
      \end{split}
  \end{equation*}
Since $H\in(0,\frac12)$, Lemma~\ref{upper bound F} implies that for any fixed $s>0$,
\begin{align*}
    \lim_{t\to\infty} \frac{1}{t^H} \abs{\langle h,\, f\rangle_{\FH}-\langle h,\, f\rangle_{\FH_1}} =0,\qquad \lim_{t\to\infty}\abs{\langle h,\, f_0\rangle_{\FH}-\langle h,\, f_0\rangle_{\FH_1}} =0,
\end{align*} which, combining with \eqref{di333222} and \eqref{di333}, implies
 that \eqref{jixian limit 00} holds for Example~\ref{exmp lizi001}.
\end{proof}
{\bf Proof of Theorem~\ref{asymptoticth}: }
Propositions~\ref{proop3-1} implies that the condition $(\mathcal{A}_1)$ of Theorem~\ref{Eseiby theorem} is satisfied. It is clear that all the Gaussian processes given in Examples~\ref{exmp 000001}-\ref{exmp lizi001} are self-similar which implies the following limit \begin{equation}\label{assum51-new}
  \lim_{t \to \infty}\frac{1}{t^{2H}} R(t,t) = \lambda_G^2=\left\{
  \begin{array}{ll}
    \abs{1-2^{2H-1}},                  & \quad \text{for Examples~\ref{exmp 000001} and \ref{exmp 000001-06}}, \\
    {2-2^{K}},                         & \quad \text{for Example~\ref{exmp 000001-04}} ,                       \\
    \Gamma(1-2H),                      & \quad \text{for Example~\ref{exmp 000001-06-buch}},                   \\
    2^{K}-2^{2H'K-1} ,                 & \quad \text{for Example~\ref{exmp0005}},                              \\
    \frac{(a+b)^2- 2^{2H}ab}{a^2+b^2}, & \quad\text{for Example~\ref{exmp7-1zili}},                            \\
    \frac12 ,                          & \quad \text{for Example~\ref{exmp lizi001}},
  \end{array}
  \right.
\end{equation}holds,  which means the condition $(\mathcal{A}_2)$ of Theorem~\ref{Eseiby theorem} is valid. 

Proposition~\ref{proop 3-5} implies that  the condition $(\mathcal{A}_3)$ of Theorem~\ref{Eseiby theorem} is satisfied. The second limit of \eqref{assum52-1} implies that the condition $(\mathcal{A}_4)$ of Theorem~\ref{Eseiby theorem} is satisfied. The first limit of \eqref{assum52-1} and Proposition~\ref{assum5-1} imply that the condition $(\mathcal{A}_5)$ of Theorem~\ref{Eseiby theorem} is satisfied. 

In a word, all the conditions $(\mathcal{A}_1)$-$(\mathcal{A}_5)$ of Theorem~\ref{Eseiby theorem} are satisfied for Examples~\ref{exmp 000001}-\ref{exmp lizi001}. Therefore, Theorem~\ref{Eseiby theorem} implies the desired Theorem~\ref{asymptoticth}.
  
{ 
Remark~\ref{asymptoticth OU} is a special case of Theorem~\ref{asymptoticth} essentially. We can also give it an independent proof by Theorem~\ref{Esseiby 2}.
First, the assumption $(\mathcal{A}_1)$ implies that the assumptions $(\mathcal{H}_1)$-$(\mathcal{H}_2)$ hold. Next, the conditions $(\mathcal{A}_s)$-$(\mathcal{A}_4)$ of Theorem~\ref{Eseiby theorem} have been checked in Proof of Theorem~\ref{asymptoticth}. 
In a word, all the conditions $(\mathcal{H}_1)$-$(\mathcal{H}_2)$ and $(\mathcal{A}_3)$-$(\mathcal{A}_4)$ of Theorem~\ref{Esseiby 2} are satisfied for Examples~\ref{exmp 000001}-\ref{exmp lizi001}. Therefore, Theorem~\ref{Esseiby 2} implies the desired \eqref{ou jiexianfubu}.}
  {\hfill\large{$\Box$}}

\end{document}